\newcommand{\R}{\mathbb{R}}
\newcommand{\E}{\mathcal{E}}
\newcommand{\T}{\mathcal{T}}
\newcommand{\Sc}{\mathcal{S}}
\newcommand{\Exp}{\mathbb{E}}
\numberwithin{equation}{section}
\newtheorem{theorem}{Theorem}[section]
\newtheorem{corollary}[theorem]{Corollary}
\newtheorem{lemma}[theorem]{Lemma}
\newtheorem{proposition}[theorem]{Proposition}
\newtheorem{remark}[theorem]{Remark}
\newtheorem{definition}[theorem]{Definition}
\newtheorem{example}[theorem]{Example}
\title{Solutions of SPDE's  associated with
a stochastic flow}
\author{Suprio Bhar}
\address{Suprio Bhar, Tata Institute of Fundamental Research, Centre For Applicable Mathematics,Post Bag No 6503, GKVK Post Office, Sharada Nagar, Chikkabommsandra, Bangalore 560065, India.}
\email{suprio@tifrbng.res.in}
\author{Rajeev Bhaskaran}
\address{Rajeev Bhaskaran, 8th Mile Mysore Road, Indian Statistical Institute, Bangalore 560059, India.}
\email{brajeev@isibang.ac.in}
\author{Barun Sarkar}
\address{Barun Sarkar, 8th Mile Mysore Road, Indian Statistical Institute, Bangalore 560059, India.}
\email{barunsarkar.math@gmail.com}
\date{}
\begin{document}

\begin{abstract}
We consider the following stochastic partial differential equation,
\begin{align*}
&dY_t=L^\ast Y_tdt+A^\ast Y_t\cdot dB_t\\
&Y_0=\psi,
\end{align*}
associated with a stochastic flow $\{X(t,x)\}$, for $t \geq 0$, $x
\in \R^d$, as in [Rajeev \& Thangavelu, \emph{{Probabilistic
representations of solutions of the forward
  equations}}, Potential Anal. \textbf{28} (2008), no.~2, 139--162]. We show that the strong
solutions constructed there are `locally of compact support'. Using
this notion,we define the mild solutions of the above equation and
show the equivalence between strong and mild solutions in the multi
Hilbertian space $\Sc^\prime$. We show uniqueness of solutions in
the case when $\psi$ is smooth via the `monotonicity inequality' for
$(L^\ast,A^\ast)$, which is a known criterion for uniqueness.
\end{abstract}
\keywords{$\E^\prime$ valued process, Hermite-Sobolev space, Mild solution, Strong solution, Monotonicity inequality, $\Sc^\prime$ valued processes locally of compact support, Stochastic flow, Martingale representation}
\subjclass[2010]{Primary: 60H15; Secondary: 60H10, 46E35}

\maketitle

\section{Introduction}\label{s:intro}
In this article we study the equation for the stochastic flow
generated by a finite dimensional diffusion $\{X(t,x)\}$ starting at
$x \in {\mathbb R}^d$ and satisfying a stochastic differential
equation with smooth coefficients viz.
\begin{equation}
\begin{split}
  &dX_t=\sigma(X_t)\cdot dB_t+b(X_t)dt\\
 &X_0=x.
\end{split}
\end{equation} We recall from \cite{MR2373102} that
the equation for the flow is given as
\begin{equation}\label{SPDE}
\begin{split}
dY_t&=L^\ast Y_tdt+A^\ast Y_t\cdot dB_t\\
Y_0&=\psi,
\end{split}
\end{equation}
where the operators $L^\ast$, $A^\ast$ are adjoints of the operators
$L$, $A$ respectively, associated with the diffusion and defined in
Section \ref{s:2} below. There the solutions of \eqref{SPDE} were
constructed in the space of distributions with compact support and a
fortiori, in some Hermite-Sobolev space $\Sc_p$ for some $p \in
{\mathbb R}$. Such solutions, given in terms of a representation of
$\psi$ as the derivative of continuous functions (see
\eqref{eq2},\eqref{eq4}) and $X(t,x)$ and its derivatives, can be
shown to take values in $\Sc_q$ for some $q < p$. In particular,
when $\psi$ is given by a function, then the solution is given by
\begin{equation}
Z_t(\psi) := \int_{\R^d}
\psi(x)\delta_{X(t,x)}~dx.
\end{equation}
While the set up of the
Hermite-Sobolev spaces facilitated the construction of solutions and
thus settled the question of existence, these same spaces turn out
to be difficult to handle when it comes to the question of
uniqueness of the solutions.

It was shown in \cite{MR2373102} that uniqueness of solutions for \eqref{SPDE} follows from the so called `Monotonicity inequality' for the
pair of operators $(L^\ast, A^\ast)$ (see \eqref{mono-ineq} below). However, the
multiplication operators that intervene in the definition of $L^\ast$
and $A^\ast$ create significant difficulties in proving these
inequalities because of their non self adjointness in these spaces.
Uniqueness for the Gaussian case can however be handled by special
methods (see \cite{Bhar2017}). For some background on the `Monotonicity
inequality' we refer to \cite{MR570795, MR1135324, MR2590157, MR2479730, MR3331916, MR3063763}. In this paper we
prove the Monotonicity inequality, in Section \ref{s:4},  in the self
adjoint case i.e. when the inequality holds in $\mathcal{L}^2({\mathbb R}^d)$
or $q = 0$ and when the initial condition belongs to $\Sc_p$, $p
\geq 5$. In particular the solutions are unique when the initial
condition $\psi$ is in $C^\infty_c({\mathbb R}^d) (\subset \Sc)$, the space of smooth functions with compact support.

We show that the solutions constructed in \cite{MR2373102} have an
important property viz. that they are `locally of compact support'
(see Definition \ref{loc-cpt-supp} and Proposition
\ref{specific-cpt-support}). This means that upto a stopping time
the supports of $Z_t(\psi)$ are contained in some compact set,
almost surely. In particular, the supports grow slowly enough that
such containment is possible. This contrasts sharply with the
expected value of such solutions which need not be of compact
support. Indeed this is precisely the behavior of the solutions of
the Cauchy problem for the Laplacian with initial value $\delta_0$,
the Dirac distribution at zero. The latter property is connected
with the stochastic representation of the solutions to the Cauchy
problem associated to $L^\ast$ (see \cite{MR2373102, MR1999259}).

    While the above discussion relates to `strong solutions' another
notion of solution for stochastic PDE's which is frequently used is
the notion of `mild solution'. There is an extensive literature on
solutions of SPDEs in the mild form in various function spaces (see
\cite{MR2560625, MR1207136, MR1489313,MR3222416}). However
there seems to be very little on mild solutions in the dual of a
countable Hilbertian space, the set up that we use (see however
\cite{MR3071506} equation (81), \cite[Chapter 3, Section
5]{MR771478},\cite{MR922982, MR1465436}, \cite[Theorem
6.1]{MR674449}, \cite[equation (3.11), p.314]{MR876085}). We show in
Section \ref{s:3} that the strong and mild solutions are equivalent
in this set up. The mild solutions of \eqref{SPDE} above, say
$\{Y_t\}$, are defined in terms of the dual $(S_t^\ast)$ of the
semi-group $(S_t)$ associated with the diffusion $\{X_t\}$. This
requires that the domain of $S_t^\ast$ be the distributions with
compact support. More over we can obtain good bounds on the operator
norms of $S_t^\ast$ only when the domain is restricted to
distributions with support in a fixed compact set (see \cite[Theorem
4.8]{MR2373102}). Thus a term like $S_t^\ast A^\ast Y_t$ in the
stochastic integral, in the definition of mild solution, is well
defined if the support of the process $\{Y_t\}$ is contained in a
fixed compact set. While this need not be the case in general, for
stochastic integrals to make sense, it is enough if this property
holds locally in time i.e. up to  a stopping time. Thus the notion
of a process which is `locally of compact support' appears quite
naturally in the definition of mild solutions in our set up.

We give two proofs that the strong solutions are also mild
solutions. One of them goes through an integration by parts formula
for the `product' $S^\ast_{t-s}Y_s$ ($t$ fixed, $s \leq t$) while the other uses an It\^o formula for the
function $G(s,y) := <x,S_{t-s}^\ast y>, x \in \Sc$ for fixed $x,t$ and
$s \leq t$.
The latter proof follows closely a well known computation in the
finite dimensional case to prove the martingale representation for
functionals of the form $f(X_t)$ by applying It\^o formula to $g(s,y)
:= S_{t-s}f(y)$. Indeed, it follows in fact that the mild solution
representation for finite dimensional diffusion is equivalent to the
martingale representation (see Proposition \ref{mld-mar}).

\section{Preliminaries}\label{s:2}
In this section, we describe the framework of our results which is
the same as that of \cite{MR2373102}, and recall the main results
from there. We also introduce the notion of distribution valued
processes which are `locally of compact support'.

Let $\Omega=C([0,\infty),\mathbb{R}^r)$ denote the set of continuous
functions on $[0,\infty)$ with values in $\mathbb{R}^r$. Let
$\mathcal{F}$ be the Borel $\sigma$-field on $\Omega$ and $P$ be the
Wiener measure. We denote $B_t(\omega):=\omega(t)$,
$\omega\in\Omega$, $t\geq0$ and recall that under $P$, $\{B_t\}$ is
a standard $r$ dimensional Brownian motion. Consider the following
stochastic differential equation
\begin{equation}\label{eq1}
\begin{split}
  &dX_t=\sigma(X_t)\cdot dB_t+b(X_t)dt\\
 &X_0=x
\end{split}
\end{equation}
with $\sigma=(\sigma_{ij})$, $i=1,\cdots,d$, $j=1,\cdots,r$ and $b=(b_1,\cdots,b_d)$, where $\sigma_{ij}$ and $b_i$ are given $C^\infty$ functions on  $\mathbb{R}^d$ with bounded derivatives. In particular, we have \begin{align*}
\|\sigma(x)\|+\|b(x)\|:=\big(\sum_{i=1}^d\sum_{j=1}^r|\sigma_{ij}(x)|^2\big)^{1/2}+\big(\sum_{i=1}^d|b_i(x)|^2\big)^{1/2}\leq K(1+|x|)
\end{align*}
for some $K>0$ and $|x|^2:=\sum_{i=1}^dx_i^2$. Under the above assumptions on $\sigma$ and $b$, it is well known that a unique, non-explosive strong
solution $\{X(t,x,\omega)\}_{t\geq0,x\in\mathbb{R}^d}$ exists on $(\Omega,\mathcal{F},P)$ (see ref. \cite{MR1011252}).
\begin{theorem}[{\cite{MR1472487}}]\label{thm1}
For $x\in\mathbb{R}^d$ and $t\geq0$, let $\{X(t,x,\omega)\}$ be the unique strong solution of \eqref{eq1}. Then there exists a process
$\{\tilde{X}(t,x,\omega)\}_{t\geq0,x\in\mathbb{R}^d}$ such that
\begin{enumerate}[label=(\roman*)]
\item For all $x\in\mathbb{R}^d$, $P\{\tilde{X}(t,x,\omega)=X(t,x,\omega),\ \forall t\geq0\}=1$.
\item For a.e. $\omega(P)$, $x\rightarrow\tilde{X}(t,x,\omega)$ is $C^\infty$ diffeomorphism for all $t\geq0$.
\item Let $\theta_t:\Omega\rightarrow\Omega$ be the shift operator i.e. $\theta_t\omega(s)=\omega(s+t)$; then for $s,t\geq0$, we have
\begin{align*}
\tilde{X}(t+s,x,\omega)=\tilde{X}(s,\tilde{X}(t,x,\omega),\theta_t\omega)
\end{align*}
for all $x\in\mathbb{R}^d$, a.e. $\omega(P)$.
\end{enumerate}
\end{theorem}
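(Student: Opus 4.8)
The plan is to follow the standard Kolmogorov-type construction of a stochastic flow of diffeomorphisms. The foundation is a family of moment estimates. Since $\sigma$ and $b$ have bounded derivatives, they are globally Lipschitz, so for each $p \geq 2$ the Burkholder--Davis--Gundy inequality together with Gronwall's lemma yields constants $C = C(p,T)$ such that, for $0 \leq s,t \leq T$ and $x,y \in \R^d$,
\begin{align*}
\Exp|X(t,x)-X(s,x)|^p &\leq C(1+|x|^p)|t-s|^{p/2},\\
\Exp|X(t,x)-X(t,y)|^p &\leq C|x-y|^p.
\end{align*}
Feeding these into the multiparameter Kolmogorov--Centsov continuity theorem produces a modification $\tilde X(t,x,\omega)$ that is jointly continuous in $(t,x)$; since $\tilde X$ and $X$ agree for each fixed $(t,x)$ a.s. and both are continuous in $t$, a standard separability argument upgrades this to the statement in (i).

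For (ii), I would first obtain spatial smoothness. Differentiating \eqref{eq1} formally in $x$ suggests that the Jacobian $J(t,x) := D_x X(t,x)$ solves the linear (matrix) SDE
\begin{align*}
dJ_t = D\sigma(X_t)J_t \cdot dB_t + Db(X_t)J_t\,dt,\qquad J_0 = I.
\end{align*}
I would solve this equation, then justify that its solution really is the derivative of $X$ by estimating the $L^p$ norms of the difference quotients $\big(X(t,x+he_i)-X(t,x)\big)/h - J(t,x)e_i$ and showing they tend to $0$; iterating this for the higher-order derivatives (which satisfy analogous linear SDEs with polynomial nonlinearities in the lower derivatives) gives moment bounds for all derivatives $D^\alpha_x X$. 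Another application of Kolmogorov's theorem then yields joint continuity of every $D^\alpha_x \tilde X$ in $(t,x)$, so $x \mapsto \tilde X(t,x,\omega)$ is $C^\infty$ for a.e. $\omega$ and all $t$.

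The diffeomorphism claim in (ii) is where the real work lies, and I expect it to be the main obstacle. Three things must hold simultaneously for all $t$, a.s.: that $x \mapsto \tilde X(t,x)$ is injective, that it is onto $\R^d$, and that its inverse is smooth. Injectivity is the delicate point: one establishes a negative moment estimate of the form $\Exp|X(t,x)-X(t,y)|^{-p} \leq C|x-y|^{-p}$, which together with a Kolmogorov argument applied to $\eta(t,x,y):=|\tilde X(t,x)-\tilde X(t,y)|^{-1}$ (on $x \neq y$) shows that points are a.s. never collapsed. Invertibility of the Jacobian comes for free once one checks by It\^o's formula that $K_t := J_t^{-1}$ solves its own linear SDE and hence never degenerates. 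Properness, and thence surjectivity, I would get by compactifying $\R^d$ to the sphere, extending the flow continuously to the point at infinity via an estimate controlling $|X(t,x)|$ from below for large $|x|$, and invoking a degree / invariance-of-domain argument; smoothness of the inverse is then immediate from the inverse function theorem since $D_x\tilde X$ is everywhere nonsingular.

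Finally, the cocycle identity (iii) follows from uniqueness of strong solutions combined with time-homogeneity. For fixed $t$ and $x$, the process $s \mapsto X(t+s,x,\omega)$ satisfies \eqref{eq1} driven by the shifted noise $\theta_t\omega$ and started from $X(t,x,\omega)$; by pathwise uniqueness it must coincide with $\tilde X(s,\tilde X(t,x,\omega),\theta_t\omega)$ for a.e. $\omega$, for each fixed $x$. Joint continuity in $x$ of both sides then removes the dependence of the null set on $x$, giving the identity for all $x$ simultaneously, a.e. $\omega$.
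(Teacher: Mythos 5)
The paper offers no proof of this theorem at all: it is quoted verbatim from Kunita's book \cite{MR1472487}, which is exactly the reference whose argument you have reconstructed (Kolmogorov--Centsov modification from $L^p$ moment estimates, derivative flows via linear SDEs for the Jacobian and its inverse, injectivity from negative-moment estimates applied to $|\tilde X(t,x)-\tilde X(t,y)|^{-1}$, surjectivity via one-point compactification and a degree argument, and the cocycle identity from pathwise uniqueness plus continuity in $x$). Your outline is the standard and correct route, so it agrees with the source the paper relies on; there is nothing in the paper itself to compare it against.
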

We denote the modification obtained in Theorem \ref{thm1} again by $\{X(t,x,\omega)\}$. For $\omega$ outside a null set
$\tilde{N}$, the flow of diffeomorphisms induces, for each $t\geq0$ a continuous linear map, denoted by $X_t(\omega)$ on $C^{\infty}$.
The map $X_t(\omega):C^{\infty}\rightarrow C^{\infty}$ is given by $X_t(\omega)(\varphi)(x)=\varphi(X(t,x,\omega))$. This map is linear and continuous w.r.t. the
topology on $C^{\infty}$ given by the following family of seminorms: For $K\subset\mathbb{R}^d$ a compact set, let
$\|\varphi\|_{n,K}:=\max_{|\alpha|\leq n}\sup_{x\in K}|D^{\alpha}\varphi(x)|$ where $\varphi\in C^{\infty}$ and $n\geq1$ an integer and
$\alpha=(\alpha_1,\alpha_2,\cdots,\alpha_d)$ and $|\alpha|=\alpha_1+\alpha_2+\cdots+\alpha_d$. Let $K_{t,\omega}$ denote the image of $K$ under the map
$x\rightarrow X(t,x,\omega)$. Then using the chain rule we can show that there exists a constant $C(t,\omega)>0$ such that
\begin{align*}
\|X_t(\omega)(\varphi)\|_{n,K}\leq C(t,\omega)\|\varphi\|_{n,K_{t,\omega}}.
\end{align*}
Let $X_t(\omega)^\ast$ denote the transpose of the map
$X_t(\omega):C^{\infty}\rightarrow C^{\infty}$. Let
$\mathcal{E}^\prime$ denote the space of distributions with compact
support. We will denote the duality between $\mathcal{E}^{\prime}$
and $C^{\infty}$ by $\langle \cdot, \cdot\rangle$. Below we will use
the same notation for the $\mathcal{L}^2$-inner product. Then
$X_t(\omega)^\ast:\mathcal{E}^\prime\rightarrow\mathcal{E}^\prime$
is given by
\begin{align*}
\langle X_t(\omega)^\ast\psi,\varphi\rangle=\langle\psi,X_t(\omega)\varphi\rangle
\end{align*}
for all $\varphi\in C^{\infty}$ and $\psi\in\mathcal{E}^\prime$. For
subsets $K$ of ${\mathbb R}^d$, we will denote by
$\mathcal{E}^\prime(K)$ the set of $\psi\in\mathcal{E}^\prime$ with
$supp\ \psi\subseteq K$. Let $K$ be a compact subset of ${\mathbb
R}^d$ and let $\psi\in\mathcal{E}^\prime(K)$. Let
$N=order(\psi)+2d$. Then there exist continuous functions
$g_{\alpha}$, $|\alpha|\leq N$, $supp\ g_{\alpha}\subseteq V$ where
$V$ is an open set having compact closure, containing $K$, such that
\begin{equation}\label{eq2}
\psi=\sum_{|\alpha|\leq N}\partial^{\alpha}g_{\alpha}.
\end{equation}
See \cite{MR2296978}. Let $\varphi\in C^{\infty}$. Let $f_i\in
C^{\infty}$ and $f=(f_1,\cdots,f_d)$. Let $\alpha$ be a multi index.
We now describe each of the numbers $\partial^{\alpha}(\varphi\circ
f)(x)$, $x\in\mathbb{R}^d$ as the result of a distribution
(depending on $x\in\R^d$) acting on the test function $\phi$. Let
$\beta^i$, $i=1,\cdots,d$ be multi indices, each with $d$
components. Using the chain rule for differentiation, we can verify
that for each multi index $\gamma$ with $|\gamma|\leq|\alpha|$,
there exist polynomials $P_{\gamma}$, in a finite number of
variables, with $deg\ P_{\gamma}=|\gamma|$, such that
\begin{equation}\label{eq3}
\partial^{\alpha}(\varphi\circ f)(x)=\sum_{|\gamma|\leq|\alpha|}(-1)^{|\gamma|}
P_{\gamma}((\partial^{\beta^1}f_1,\cdots,\partial^{\beta^d}f_d)_{|\beta^i|\leq|\alpha|})(x)\langle\varphi,\partial^{\gamma}\delta_{f(x)}\rangle.
\end{equation}
For $\omega\notin\tilde{N}$, define $Z_t(\omega):\E^\prime\rightarrow\E^\prime$ by
\begin{equation}\label{eq4}
\begin{split}
Z_t(\omega)(\psi)=&\sum_{|\alpha|\leq N}(-1)^{|\alpha|}\sum_{|\gamma|\leq|\alpha|}(-1)^{|\gamma|}\int_Vg_{\alpha}(x)\\
&P_{\gamma}((\partial^{\beta^1}X_1,\cdots,\partial^{\beta^d}X_d)_{|\beta^i|\leq|\alpha|})(t,x,\omega)\partial^{\gamma}\delta_{X(t,x,\omega)}dx
\end{split}
\end{equation}
Take $Z_t(\omega)=0$ if $\omega\in\tilde{N}$.

Let $\Sc$ be the space of smooth rapidly decreasing functions on $\R^d$ with dual $\Sc^\prime$, the space of tempered distributions (see \cite{MR771478}). For $p \in \R$, consider the increasing norms $\|\cdot\|_p$, defined by the inner
products
\[\langle f,g\rangle_p:=\sum_{|k|=0}^{\infty}(2|k|+d)^{2p}\langle f,h_k\rangle\langle g,h_k\rangle,\ \ \ f,g\in\Sc.\]
Here, $\{h_k\}_{|k|=0}^{\infty}$ is an orthonormal basis for $\mathcal{L}^2(\R^d,dx)$ given by Hermite functions (for $d=1$,
$h_k(t)=(2^kk!\sqrt{\pi})^{-1/2}\exp\{-t^2/2\}H_k(t)$, where $H_k$ are the Hermite polynomials, see \cite{MR771478}), $\langle\cdot,\cdot\rangle$ is the usual
inner product in $\mathcal{L}^2(\R^d,dx)$. We define the Hermite-Sobolev spaces $\Sc_p, p \in \R$ as the completion of $\Sc$ in
$\|\cdot\|_p$. Note that the dual space $\Sc_p^\prime$ is isometrically isomorphic with $\Sc_{-p}$ for $p\geq 0$. We also have $\Sc = \bigcap_{p}(\Sc_p,\|\cdot\|_p), \Sc^\prime=\bigcup_{p>0}(\Sc_{-p},\|\cdot\|_{-p})$ and $\Sc_0 = \mathcal{L}^2(\R^d)$. The space $C^\infty_c(\R^d)$ of smooth functions with compact support is dense in $\Sc$ (in $\|\cdot\|_p$) and hence in $\Sc_p$, for $p \in \R$.

\begin{theorem}[{\cite{MR2373102}}]\label{thm2}
Let $\psi$ be a distribution with compact support having representation \eqref{eq2}. Let $p>0$ be such that $\partial^{\alpha}\delta_x\in \Sc_{-p}$
for $|\alpha|\leq N$. Then $\{Z_t(\psi)\}_{t\geq0}$ is an $\Sc_{-p}$ valued continuous adapted process such that for all $t\geq0$,
\begin{equation}\label{Z-as-dual-X}
Z_t(\psi)=X_t^\ast(\psi)\ \ a.s.\ P.
\end{equation}

\end{theorem}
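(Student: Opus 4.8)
The plan is to verify the identity $Z_t(\psi)=X_t^\ast(\psi)$ by unwinding the definition of the transpose $X_t^\ast$ together with the representation \eqref{eq2} and the chain-rule expansion \eqref{eq3}, and then to read off the three structural claims (that $Z_t(\psi)$ takes values in $\Sc_{-p}$, is continuous in $t$, and is adapted) from the explicit formula \eqref{eq4}. Throughout I fix $\omega\notin\tilde{N}$, so that $x\mapsto X(t,x,\omega)$ is a $C^\infty$ diffeomorphism whose spatial derivatives are jointly continuous in $(t,x)$.

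First I would argue that \eqref{eq4} is a genuine Bochner integral in $\Sc_{-p}$. Its integrand is $x\mapsto g_\alpha(x)\,P_\gamma\big((\partial^{\beta^i}X_i)_{|\beta^i|\le|\alpha|}\big)(t,x,\omega)\,\partial^\gamma\delta_{X(t,x,\omega)}$. Since $|\gamma|\le|\alpha|\le N$, the hypothesis gives $\partial^\gamma\delta_y\in\Sc_{-p}$ for every $y$; the key analytic input is that $y\mapsto\partial^\gamma\delta_y$ is continuous from $\R^d$ into $\Sc_{-p}$, with $\|\partial^\gamma\delta_y\|_{-p}$ bounded uniformly for $y$ in compact sets (estimated through the Hermite expansion $\delta_y=\sum_k h_k(y)h_k$ and pointwise bounds on the $h_k$ and their derivatives). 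Combined with the continuity of the scalar factor $g_\alpha$ (supported in $V$) and of the polynomial $P_\gamma(\cdots)(t,x,\omega)$ in $x$, the integrand is a continuous $\Sc_{-p}$-valued function supported on the compact set $\overline{V}$, hence bounded and Bochner integrable. This shows $Z_t(\psi)\in\Sc_{-p}$.

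For the equality, take $\varphi\in C^\infty_c$. By definition of the transpose and \eqref{eq2},
\begin{align*}
\langle X_t^\ast(\psi),\varphi\rangle
&=\langle\psi,\varphi\circ X(t,\cdot,\omega)\rangle\\
&=\sum_{|\alpha|\le N}(-1)^{|\alpha|}\int_V g_\alpha(x)\,\partial^\alpha\big(\varphi\circ X(t,\cdot,\omega)\big)(x)\,dx,
\end{align*}
where integration by parts is legitimate because the $g_\alpha$ are continuous with compact support and $\varphi\circ X(t,\cdot,\omega)\in C^\infty_c$. Applying \eqref{eq3} with $f=X(t,\cdot,\omega)$ rewrites $\partial^\alpha(\varphi\circ X(t,\cdot,\omega))(x)$ as $\sum_{|\gamma|\le|\alpha|}(-1)^{|\gamma|}P_\gamma(\cdots)(t,x,\omega)\langle\varphi,\partial^\gamma\delta_{X(t,x,\omega)}\rangle$. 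On the other side, since $\varphi\in\Sc_p$ the functional $F\mapsto\langle F,\varphi\rangle$ is continuous on $\Sc_{-p}$ and therefore commutes with the Bochner integral of the previous paragraph; pairing \eqref{eq4} with $\varphi$ yields exactly the same triple sum and integral. Hence $\langle Z_t(\psi),\varphi\rangle=\langle X_t^\ast(\psi),\varphi\rangle$ for all $\varphi\in C^\infty_c$, and as $C^\infty_c$ is dense in $\D$ this forces $Z_t(\psi)=X_t^\ast(\psi)$ as distributions.

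Finally, continuity and adaptedness follow from the same representation. For continuity in $t$ I would fix a compact time interval, use that $(t,x)\mapsto X(t,x,\omega)$ and its spatial derivatives are jointly continuous (Theorem \ref{thm1}) with images staying in a fixed compact set, so that as $t'\to t$ the $\Sc_{-p}$-valued integrand converges uniformly in $x$ and remains dominated by a fixed integrable function; dominated convergence for the Bochner integral then gives $\|Z_{t'}(\psi)-Z_t(\psi)\|_{-p}\to0$. Adaptedness is immediate, since for fixed $t$ the flow and its spatial derivatives are $\mathcal{F}_t$-measurable and a Bochner integral of an $\mathcal{F}_t$-measurable $\Sc_{-p}$-valued integrand is $\mathcal{F}_t$-measurable. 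The main obstacle is the analytic bookkeeping of the second step: establishing that $y\mapsto\partial^\gamma\delta_y$ is continuous into $\Sc_{-p}$ with norms locally uniformly bounded, thereby turning the formal expression \eqref{eq4} into a bona fide $\Sc_{-p}$-valued Bochner integral whose pairing with test functions may be interchanged with integration. Everything else is a direct unwinding of the definitions of $X_t^\ast$ and of \eqref{eq3}--\eqref{eq4}.
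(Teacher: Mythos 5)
Your proof is correct and follows essentially the same route as the source: the paper itself states Theorem \ref{thm2} without proof, citing \cite{MR2373102}, and the argument there is precisely the one you give --- establish that \eqref{eq4} is a genuine $\Sc_{-p}$-valued Bochner integral via locally uniform Hermite-expansion estimates on $\|\partial^{\gamma}\delta_y\|_{-p}$, then unwind the duality $\langle X_t^\ast\psi,\varphi\rangle=\langle\psi,\varphi\circ X(t,\cdot,\omega)\rangle$ through \eqref{eq2} and \eqref{eq3} and match it with the pairing of \eqref{eq4} against test functions, with continuity and adaptedness read off from the joint continuity of the flow and its spatial derivatives. The only cosmetic slip is the phrase ``$C^\infty_c$ is dense in $\D$'' (they coincide; what you need is that two distributions agreeing on $C^\infty_c$ are equal, which is what your density-in-$\Sc_p$ remark delivers), and it does not affect the argument.
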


\begin{example}\label{spl-example}
We mention two examples corresponding to special initial values
$\psi$ for which the process $\{Z_t(\psi)\}$ is the solution of the
SPDE \eqref{SPDE}. Uniqueness of the solution in the case of the
first example is one of the principal motivations for and the main
application of, the results of this paper. We refer to the results
of \cite{ MR3063763} for uniqueness in the case of the second
example.
\begin{enumerate}
\item Let $\psi \in C^\infty_c(\R^d)$. Then $Z_t(\psi) = \int_{\R^d}\psi(x)\delta_{X(t,x)}\, dx$. This fact can be verified as follows.
\[\langle Z_t(\psi), \phi \rangle = \int_{\R^d}\psi(x)\phi(X(t,x))\, dx = \int_{\R^d}\psi(x)(X_t(\phi))(x)\, dx = \langle \psi, X_t(\phi)\rangle.\]
Moreover, $Z_t(\psi)$ is actually a function. To see this, let $J(t,x)$ denote the Jacobian obtained by the change of variables $x$ to $X(t,x)$. Since $x \mapsto X(t,x)$ is a diffeomorphism, $J(t,x)$ is non-zero, and in particular, $J(t,x)$ is either strictly positive or strictly negative. Now
\[\langle Z_t(\psi), \phi \rangle = \int_{\R^d}\psi(X(t,\cdot)^{-1}x)\phi(x)|J(t,x)|\, dx.\]
Therefore $Z_t(\psi)$ is given by the $C^\infty_c(\R^d)$ function $x
\mapsto \psi(X(t,\cdot)^{-1}x)|J(t,x)|$. Note that the same
computations go through if $\psi \in \Sc$. However,
\eqref{Z-as-dual-X} need not hold.
\item Take $\psi = \delta_x$ for some  $x \in \R^d$. Then  $Z_t(\psi) = \delta_{X(t,x)}$.
\end{enumerate}

\end{example}

We now define the operators $A:C^{\infty}\rightarrow\mathcal{L}(\R^r,C^{\infty})$ and $L:C^{\infty}\rightarrow C^{\infty}(\R^d)$ as follows: for
$\varphi\in C^{\infty}$, $x\in\R^d$,
\begin{align*}
&A\varphi=(A_1\varphi,\cdots,A_r\varphi),\\
&A_i\varphi(x)=\sum_{k=1}^d\sigma_{ki}(x)\partial_k\varphi(x),\\
&L\varphi(x)=\frac{1}{2}\sum_{i,j=1}^d(\sigma\sigma^t)_{ij}(x)\partial^2_{i,j}\varphi(x)+\sum_{i=1}^db_i(x)\partial_i\varphi(x).
\end{align*}
\begin{remark}
Since $\sigma$, $b$ are $C^{\infty}$ functions on $\R^d$ with bounded derivatives satisfying linear growth condition, therefore
$L:\Sc\rightarrow\Sc$.
\end{remark}

We define the adjoint operators $A^\ast:\E^\prime\rightarrow\mathcal{L}(\R^r,\E^\prime)$ and $L^\ast:\E^\prime\rightarrow\E^\prime$
\begin{align*}
&A^\ast\psi=(A^\ast_1\psi,\cdots,A^\ast_r\psi),\\
&A^\ast_i\psi=-\sum_{k=1}^d\partial_k(\sigma_{ki}\psi),\\
&L^\ast\psi=\frac{1}{2}\sum_{i,j=1}^d\partial^2_{i,j}((\sigma\sigma^t)_{ij}\psi)-\sum_{i=1}^d\partial_i(b_i\psi).
\end{align*}

\begin{proposition}[{\cite[Proposition 3.2]{MR2373102}}]\label{prop1}
Let $\sigma_{ij}$, $i=1,\cdots,d$, $j=1,\cdots,r$ and $b_1,\cdots,b_d$ be $C^{\infty}$ functions on $\R^d$ with bounded derivatives. Let $p>0$ and $q>[p]+4$, where $[p]$ denotes the largest integer less than or equal to $p$. Let $K$ be a compact subset of
$\R^d$. Then, $A^\ast:\Sc_{-p}\cap\E^\prime(K)\rightarrow\mathcal{L}(\R^r,\Sc_{-q}\cap\E^\prime(K))$ and $L^\ast:\Sc_{-p}\cap\E^\prime(K)\rightarrow \Sc_{-q}\cap\E^\prime(K)$. Moreover, there exists
constants $C_1(p)>0$, $C_2(p)>0$ independent of the compact set $K$ such that
\[\|A^\ast\psi\|_{HS(-q)}\leq C_1(p)\|\psi\|_{-p},\ \ \ \ \|L^\ast\psi\|_{-q}\leq C_2(p)\|\psi\|_{-p}\]
where
\[\|A^\ast\psi\|^2_{HS(-q)}:=\sum_{i=1}^r\left\|\sum_{k=1}^d\partial_k(\sigma_{ki}\psi)\right\|_{-q}^2=\sum_{i=1}^r\|A_i^\ast\psi\|^2_{-q}.\]
\end{proposition}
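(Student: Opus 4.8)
The plan is to split the statement into two essentially independent parts: a soft support (mapping) claim and the quantitative norm bounds, which carry all the analytic content. For the support claim I would argue straight from the definitions $A_i^\ast\psi=-\sum_k\partial_k(\sigma_{ki}\psi)$ and $L^\ast\psi=\frac12\sum_{i,j}\partial^2_{i,j}((\sigma\sigma^t)_{ij}\psi)-\sum_i\partial_i(b_i\psi)$. Multiplication by a $C^\infty$ function and (distributional) differentiation never enlarge the support of a distribution, so $\operatorname{supp}(A_i^\ast\psi)\subseteq\operatorname{supp}\psi\subseteq K$ and likewise for $L^\ast\psi$. Hence both land in $\E'(K)$ as soon as we know they lie in the relevant $\Sc_{-q}$, and that membership will be a byproduct of the norm estimates.

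For the norm bounds I would pass to the predual. Since $\Sc_{-q}$ is the dual of $\Sc_q$ and $\Sc$ is dense in $\Sc_q$, the defining adjoint relations give, for $\psi\in\Sc_{-p}\cap\E'(K)$ and $\phi\in\Sc$, that $\langle A_i^\ast\psi,\phi\rangle=\langle\psi,A_i\phi\rangle$ and $\langle L^\ast\psi,\phi\rangle=\langle\psi,L\phi\rangle$. Taking suprema over $\|\phi\|_q\le1$ and using the $\Sc_{-p}$--$\Sc_p$ duality yields
\[\|A_i^\ast\psi\|_{-q}\le\|\psi\|_{-p}\sup_{\|\phi\|_q\le1}\|A_i\phi\|_p,\qquad \|L^\ast\psi\|_{-q}\le\|\psi\|_{-p}\sup_{\|\phi\|_q\le1}\|L\phi\|_p.\]
Everything is thereby reduced to the forward mapping bounds $\|A_i\phi\|_p\le C\|\phi\|_q$ and $\|L\phi\|_p\le C\|\phi\|_q$ for $\phi\in\Sc$, with constants depending only on $p$ and on finitely many sup-norms of the derivatives of $\sigma$ and $b$. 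The decisive structural point is that these forward bounds contain no reference to $K$ whatsoever, so the resulting $C_1(p),C_2(p)$ are automatically independent of the compact set; summing the $A_i$ estimate over $i=1,\dots,r$ then gives the asserted bound on $\|A^\ast\psi\|_{HS(-q)}$.

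To prove the forward bounds I would use two building blocks on the Hermite-Sobolev scale. First, the standard three-term recurrences for the Hermite functions (equivalently the creation/annihilation operators $a_j,a_j^\dagger$ with $a_jh_k=\sqrt{k_j}\,h_{k-e_j}$, $a_j^\dagger h_k=\sqrt{k_j+1}\,h_{k+e_j}$, see \cite{MR771478}) show that each of $\partial_j$ and multiplication by the coordinate $x_j$ maps $\Sc_s$ boundedly into $\Sc_{s-1/2}$, since the recurrence coefficients are of order $(2|k|+d)^{1/2}$; iterating, a differential operator of order $m$ and multiplication by a monomial of degree $m$ each lose $m/2$ on the scale. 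Second, and this is the main obstacle, I would establish that multiplication by a $C^\infty$ function whose derivatives are bounded and which has polynomial growth of degree $m$ is bounded from $\Sc_s$ into $\Sc_{s-m/2}$, with operator norm controlled by finitely many of the sup-norms $\sup|\partial^\gamma g|$. This is the delicate step, because the Hermite basis interacts transparently with $\partial_j$ and $x_j$ but not with a general smooth multiplier. The route I would take is the integer-order equivalence $\|f\|_m^2\asymp\sum_{|\alpha|+|\beta|\le 2m}\|x^\alpha\partial^\beta f\|_0^2$ for integer $m\ge0$, obtained by expanding powers of the harmonic oscillator $-\Delta+|x|^2$ (which acts on $h_k$ as multiplication by $2|k|+d$) together with the matching a priori estimates; the multiplier bound then follows from the Leibniz rule, writing $\partial^\beta(g\phi)$ as a sum of $(\partial^\gamma g)\,\partial^{\beta-\gamma}\phi$, bounding each $\partial^\gamma g$ by its sup-norm, and reading off the loss by matching powers of $x$ and orders of differentiation. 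Feeding these into $A_i=\sum_k\sigma_{ki}\partial_k$ (coefficients of linear growth) and $L=\frac12\sum(\sigma\sigma^t)_{ij}\partial^2_{i,j}+\sum b_i\partial_i$ (coefficients of linear and quadratic growth), and keeping deliberately crude track of the cumulative losses while rounding $p$ up to an integer with a fixed safety margin, produces the sufficient threshold $q>[p]+4$.
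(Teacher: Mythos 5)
You should first note a point of comparison: this paper does not prove Proposition \ref{prop1} at all --- it is quoted verbatim from \cite{MR2373102} --- so the benchmark is the original argument there, which is of the same duality type as yours (support preservation, reduction to forward bounds for $A_i$ and $L$ on test functions, then Hermite--Sobolev estimates with integer rounding, which is exactly where the generous threshold $q>[p]+4$ comes from). Your support argument, the duality reduction $\|A_i^\ast\psi\|_{-q}\le\|\psi\|_{-p}\sup_{\|\phi\|_q\le1}\|A_i\phi\|_p$, and the observation that $K$-independence of the constants is automatic because the forward bounds never see $K$, are all correct and well isolated.

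There is, however, a genuine gap in your key multiplier lemma, and it is precisely the step needed for $L^\ast$. You assume ``derivatives bounded and polynomial growth of degree $m$'' and propose ``bounding each $\partial^\gamma g$ by its sup-norm.'' For $m\ge2$ this hypothesis is self-contradictory (bounded first derivatives force at most linear growth), and the second-order coefficients $(\sigma\sigma^t)_{ij}=\sum_k\sigma_{ik}\sigma_{jk}$ of $L$ are exactly the problematic case: they have quadratic growth and their \emph{first derivatives have linear growth, hence are unbounded}, so the Leibniz terms $x^\alpha\,\partial^\gamma g\,\partial^{\beta-\gamma}\phi$ with $|\gamma|=1$ cannot be estimated by a sup-norm of $\partial^\gamma g$. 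Two repairs are available. (i) Restate the lemma for smooth $g$ with $|\partial^\gamma g(x)|\le C_\gamma(1+|x|)^m$ for every $\gamma$; your Leibniz/power-matching argument then goes through verbatim and gives the loss $m/2$ into integer-order targets, covering $(\sigma\sigma^t)_{ij}$ with $m=2$. (ii) Apply your linear-growth lemma twice, multiplying by $\sigma_{jk}$ and then $\sigma_{ik}$; but note this requires the multiplier bound at the half-integer target index $n+\tfrac12$, which your integer-order equivalence $\|f\|_n^2\asymp\sum_{|\alpha|+|\beta|\le2n}\|x^\alpha\partial^\beta f\|_0^2$ does not by itself provide, so (i) is the cleaner fix. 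With (i), the bookkeeping closes comfortably: for $\phi\in\Sc$, $\|A_i\phi\|_p\le\|A_i\phi\|_{[p]+1}\lesssim\|\phi\|_{[p]+2}$ and $\|L\phi\|_p\le\|L\phi\|_{[p]+1}\lesssim\|\phi\|_{[p]+3}$, both within $q>[p]+4$, and summing over $i=1,\dots,r$ gives the $HS(-q)$ bound.
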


\begin{definition}\label{loc-cpt-supp}
We say that an $\E^\prime$ valued process $\{Y_t\}$ is locally of compact support if there exists an increasing
sequence of stopping times $\{\tau_n\}$ such that $\tau_n \uparrow \infty$ and for each $n$, a.s.
$supp(Y^{\tau_n}_t) \subset K_n, \forall t$ where $\{K_n\}$ is some increasing family of compact sets.
\end{definition}

\begin{proposition}\label{cpt-supp-st-intg}
Let $
Y = (Y^1,\cdots,Y^r)$, where each $\{Y_t^i\}$ is an $\Sc_{-p}\cap\E^\prime$ valued adapted process with continuous paths in $\|\cdot\|_{-p}$ norm and is locally of compact support. Then the local martingale $\{\int_0^t Y_s\cdot dB_s\}$ is locally of compact support.
\end{proposition}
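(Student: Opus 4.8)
The plan is to reduce the claim to the single integrator case and then transfer the support property through the stochastic integral by pairing against test functions that vanish near the relevant compact sets. First I would assemble the localizing data. For each $i$ the hypothesis that $\{Y^i_t\}$ is locally of compact support supplies stopping times $\tau_n^i \uparrow \infty$ and increasing compact sets $K_n^i$ with $supp(Y^i_{t\wedge\tau_n^i})\subseteq K_n^i$ for all $t$, a.s. Since $r$ is finite, I set $\tau_n := \min_{1\le i\le r}\tau_n^i$ and $K_n := \bigcup_{i=1}^r K_n^i$. Then $\tau_n$ is increasing, $\tau_n\uparrow\infty$ (a finite minimum of sequences each tending to infinity), each $K_n$ is compact and increasing in $n$, and for every $s\le\tau_n$ one has $supp(Y^i_s)\subseteq K_n^i\subseteq K_n$ simultaneously for all $i$.

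Write $M_t := \int_0^t Y_s\cdot dB_s = \sum_{i=1}^r\int_0^t Y^i_s\, dB^i_s$. Using the standard identity for stopped stochastic integrals, $M_{t\wedge\tau_n}=\int_0^t \mathbbm{1}_{[0,\tau_n]}(s)\,Y_s\cdot dB_s$, so the integrand is supported in $K_n$ for every $s$. To see that $M_{t\wedge\tau_n}$ is itself supported in $K_n$, I would test against smooth functions vanishing near $K_n$. Fix $\phi\in C^\infty_c(\R^d)$ with $supp(\phi)\cap K_n=\emptyset$. Since $\phi\in\Sc\subseteq\Sc_p$, the map $\langle\cdot,\phi\rangle$ is a continuous linear functional on the Hilbert space $\Sc_{-p}$, and a continuous linear functional commutes with the $\Sc_{-p}$ valued stochastic integral; hence
\[\langle M_{t\wedge\tau_n},\phi\rangle = \sum_{i=1}^r\int_0^t \mathbbm{1}_{[0,\tau_n]}(s)\,\langle Y^i_s,\phi\rangle\, dB^i_s.\]
For $s\le\tau_n$ the pairing $\langle Y^i_s,\phi\rangle$ vanishes because $supp(Y^i_s)\subseteq K_n$ is disjoint from $supp(\phi)$, so each integrand is a.s. zero and the continuous local martingale on the right is indistinguishable from zero. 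Thus, for each such $\phi$, a.s. $\langle M_{t\wedge\tau_n},\phi\rangle=0$ for all $t$.

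To upgrade this to a statement holding simultaneously for all admissible $\phi$, I would exploit separability. Since $\Sc_p$ is a separable Hilbert space, its subset $C^\infty_c(\R^d\setminus K_n)$ is $\|\cdot\|_p$-separable, so I may pick a countable family $\{\phi_m\}\subseteq C^\infty_c(\R^d\setminus K_n)$ that is $\|\cdot\|_p$-dense in it. On the almost sure event where $\langle M_{t\wedge\tau_n},\phi_m\rangle=0$ for all $m$ and all $t$ (a countable intersection of full measure events), continuity of $\langle M_{t\wedge\tau_n},\cdot\rangle$ on $\Sc_p$ yields $\langle M_{t\wedge\tau_n},\phi\rangle=0$ for every $\phi\in C^\infty_c(\R^d\setminus K_n)$, whence $supp(M_{t\wedge\tau_n})\subseteq K_n$ for all $t$. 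This is precisely the assertion that $\{M_t\}$ is locally of compact support with localizing sequence $\{\tau_n\}$ and compacts $\{K_n\}$; in particular each $M_t$ lies in $\E^\prime$, since for fixed $t$ we have $M_t=M_{t\wedge\tau_n}$ once $\tau_n>t$.

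The main obstacle I anticipate is the passage from the pointwise-in-$\phi$ almost sure vanishing to a single null set valid for all test functions, i.e. controlling the dependence of the exceptional set on $\phi$; this is exactly what the countable $\|\cdot\|_p$-dense family together with the continuity of the duality pairing is designed to handle. The commutation of $\langle\cdot,\phi\rangle$ with the Hilbert space valued stochastic integral is routine (approximate by simple integrands and pass to the limit using the isometry and continuity of the functional), provided one first records the path-by-path finiteness of $\int_0^t\|Y_s\|_{-p}^2\,ds$ coming from continuity of the paths, which guarantees the integral exists as a continuous local martingale in $\Sc_{-p}$, and localizes by $\tau_n$ (and, if needed, by further reducing stopping times).
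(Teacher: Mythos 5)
Your proof is correct and follows essentially the same route as the paper's: localize with a common sequence of stopping times, pair the stopped stochastic integral against test functions supported in the complement of $K_n$, and observe that the resulting scalar integrands, hence the scalar stochastic integrals, vanish. The only differences are refinements of detail --- you spell out the reduction to a common localizing sequence (minima of the $\tau_n^i$ and unions of the $K_n^i$) and the countable-dense-family argument that removes the dependence of the exceptional null set on the test function $\phi$, both of which the paper's proof leaves implicit.
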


\begin{proof}
By our hypothesis, there exists an increasing
sequence of stopping times $\{\tau_n\}$ such that $\tau_n \uparrow \infty$ and for each $n$, a.s.
$supp((Y^i)^{\tau_n}_t) \subset K_n, \forall t, \forall i$, where $\{K_n\}$ is some increasing family of compact sets. Since $\{Y_t\}$ has continuous paths, without loss of generality, we assume that $\|Y^{\tau_n}_t\|_{HS(-p)} \leq n$ for $t \geq 0$. Hence we have the existence of the stochastic integral $\{\int_0^{t\wedge\tau_n} Y_s\cdot dB_s\}$.\\
Suppose $\phi$ is a $C^{\infty}$ function such that the support of $\phi$ and its derivatives are contained in the complement of $K_n$. Then, a.s. for $t \geq 0$
\[\left\langle \int_0^{t\wedge\tau_n} Y_s\cdot dB_s, \phi\right\rangle = \sum_{i=1}^r\int_0^{t\wedge\tau_n} \langle (Y^i)^{\tau_n}_s,\phi\rangle dB_s = 0.\]
Since, by definition, $\int_0^t Y_s\cdot dB_s = \int_0^{t\wedge\tau_n} Y_s\cdot dB_s$, for $t \leq \tau_n$, the result follows.
\end{proof}

The open set $V$ mentioned before \eqref{eq2} is bounded. Hence there exists $\lambda > 0$ such that $V$ is a subset of the closed ball $\overline{B(0,\lambda)}$ of radius $\lambda$ centered at the origin. For $R \geq 0$, define \[\tau_R:=\inf\{t>0: \sup_{s \in [0,t]}\sup_{|x|\leq\lambda}|X_s(x)|\geq R\}.\]
Since $\{\sup_{|x|\leq\lambda}|X_s(x)|\}_s$ is an adapted process, the process $\{\sup_{s \in [0,t]}\sup_{|x|\leq\lambda}|X_s(x)|\}_t$ is adapted and increasing. Hence $\tau_R$ is a stopping time for each $R$.

\begin{proposition}\label{specific-cpt-support}
The process $\{Z_t(\psi)\}$, defined by \eqref{eq4}, is locally of compact support. Furthermore,
\begin{enumerate}
\item $t\leq \tau_R\Rightarrow supp(Z_t(\psi))\subseteq\overline{B(0,R)}$.
\item As $R\uparrow\infty$, $\tau_R\uparrow \infty$.
\end{enumerate}
\end{proposition}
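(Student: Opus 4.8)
The plan is to first read off, pathwise, from the explicit formula \eqref{eq4} that the support of $Z_t(\psi)$ is contained in the image of $\overline{V}$ under the flow, and then to convert the geometric control of this image supplied by the stopping times $\tau_R$ into the two assertions. So the first and decisive step is to prove
\[
supp\,\big(Z_t(\psi)\big)\subseteq X(t,\overline{V},\omega):=\{X(t,x,\omega): x\in\overline{V}\},\qquad \omega\notin\tilde{N},\ t\geq 0,
\]
where the right-hand side is compact since $x\mapsto X(t,x,\omega)$ is continuous and $\overline{V}$ is compact. To see this, fix $\omega\notin\tilde{N}$ and take any $\phi\in C^{\infty}$ whose support is disjoint from $X(t,\overline{V},\omega)$. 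Pairing \eqref{eq4} with $\phi$ and using $\langle\partial^{\gamma}\delta_{X(t,x,\omega)},\phi\rangle=(-1)^{|\gamma|}(\partial^{\gamma}\phi)(X(t,x,\omega))$, every integrand is evaluated at a point $X(t,x,\omega)$ with $x\in V\subseteq\overline{V}$; since each such point lies outside $supp\,\phi$, the function $\phi$ and all its derivatives $\partial^{\gamma}\phi$ vanish there, whence $\langle Z_t(\psi),\phi\rangle=0$. (Alternatively one may invoke Theorem \ref{thm2}, write $\langle X_t^{\ast}(\psi),\phi\rangle=\langle\psi,\phi\circ X(t,\cdot)\rangle$, and use that $x\mapsto X(t,x,\omega)$ is a diffeomorphism so that $supp(\phi\circ X(t,\cdot))=X(t,\cdot)^{-1}(supp\,\phi)$ misses $supp\,\psi$.)

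For part (1), I would feed the definition of $\tau_R$ into this support estimate. Since $V\subseteq\overline{B(0,\lambda)}$, every $x\in\overline{V}$ satisfies $|x|\leq\lambda$. The process $g(t):=\sup_{s\in[0,t]}\sup_{|x|\leq\lambda}|X_s(x)|$ is continuous and nondecreasing, and $\tau_R$ is its first passage time of the level $R$; hence $g(t)<R$ for $t<\tau_R$, and by left-continuity $g(\tau_R)\leq R$ on $\{\tau_R<\infty\}$. In either case $g(t)\leq R$ for all $t\leq\tau_R$, so $|X(t,x,\omega)|\leq g(t)\leq R$ for every $|x|\leq\lambda$, giving $X(t,\overline{V},\omega)\subseteq\overline{B(0,R)}$. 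Combined with the support estimate, this yields $supp(Z_t(\psi))\subseteq\overline{B(0,R)}$ whenever $t\leq\tau_R$.

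For part (2), observe that $\tau_R$ is nondecreasing in $R$, hence converges to a limit as $R\uparrow\infty$. Fix $\omega\notin\tilde{N}$ and $T>0$. The map $(s,x)\mapsto X(s,x,\omega)$ is continuous, so $g(T)$, being the supremum of $|X(\cdot,\cdot,\omega)|$ over the compact set $[0,T]\times\overline{B(0,\lambda)}$, is finite. Thus for any $R>g(T)$ we have $g(t)\leq g(T)<R$ for all $t\leq T$, i.e. $\tau_R>T$; letting $R\uparrow\infty$ gives $\lim_R\tau_R\geq T$, and since $T$ is arbitrary, $\tau_R\uparrow\infty$. Finally, choosing $\tau_n:=\tau_n$ and $K_n:=\overline{B(0,n)}$ in Definition \ref{loc-cpt-supp}, part (1) gives $supp(Z^{\tau_n}_t(\psi))=supp(Z_{t\wedge\tau_n}(\psi))\subseteq K_n$ for all $t$ (as $t\wedge\tau_n\leq\tau_n$), while part (2) gives $\tau_n\uparrow\infty$; hence $\{Z_t(\psi)\}$ is locally of compact support.

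The only genuinely delicate point is the support estimate in the first step: one must argue that the Dirac-type terms in \eqref{eq4} concentrate the action of $Z_t(\psi)$ exactly on the flowed image of $V$, which rests on the vanishing of $\phi$ together with all its derivatives off $supp\,\phi$. Everything afterwards is a routine consequence of the continuity and non-explosion of the flow together with the elementary properties of the first-passage times $\tau_R$.
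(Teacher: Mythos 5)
Your proof is correct and takes essentially the same route as the paper's: both arguments pair the explicit formula \eqref{eq4} with a test function supported away from $\overline{B(0,R)}$ and observe that the terms $\partial^{\gamma}\delta_{X(t,x,\omega)}$ annihilate such a function once $t\leq\tau_R$ forces $X(t,x,\omega)\in\overline{B(0,R)}$ for $x\in V$. Your intermediate estimate $supp(Z_t(\psi))\subseteq X(t,\overline{V},\omega)$, the continuity argument at $t=\tau_R$, and the explicit choice of $\tau_n$ and $K_n=\overline{B(0,n)}$ merely spell out steps the paper leaves implicit.
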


\begin{proof}
Suppose $\varphi$ is a $C^{\infty}$ function such that the support
of $\varphi$ is contained in the
 complement of $\overline{B(0,R)}$. Then \eqref{eq4} and $t\leq \tau_R$ imply
\begin{align*}
\langle Z_t(\psi), \varphi\rangle=&\sum_{|\alpha|\leq N}(-1)^{|\alpha|}\sum_{|\gamma|\leq|\alpha|}(-1)^{|\gamma|}\int_Vg_{\alpha}(x)\\
&P_{\gamma}((\partial^{\beta^1}X_1,\cdots,\partial^{\beta^d}X_d)_{|\beta^i|\leq|\alpha|})(t,x)\left\langle\partial^{\gamma}\delta_{X(t,x)},\varphi\right\rangle dx\\
=&\sum_{|\alpha|\leq N}(-1)^{|\alpha|}\sum_{|\gamma|\leq|\alpha|}(-1)^{|\gamma|}\int_Vg_{\alpha}(x)\\
&P_{\gamma}((\partial^{\beta^1}X_1,\cdots,\partial^{\beta^d}X_d)_{|\beta^i|\leq|\alpha|})(t,x)(-1)^{|\gamma|}\left\langle\delta_{X(t,x)},\partial^{\gamma}\varphi\right\rangle dx\\
= & 0.
\end{align*}
Hence $supp(Z_t(\psi))\subseteq\overline{B(0,R)}$.\\
Since the process $\{\sup_{s \in [0,t]}\sup_{|x|\leq\lambda}|X_s(x)|\}$ is finite for all $t$, $\tau_R \uparrow \infty$ as $R \uparrow \infty$.
\end{proof}

We consider the following stochastic partial differential equation
in $\E^\prime$,
\begin{equation}\label{eq5}
\begin{split}
&dY_t=L^\ast Y_tdt+A^\ast Y_t\cdot dB_t\\
&Y_0=\psi.
\end{split}
\end{equation}
Here the term $\int A^\ast Y_t\cdot dB_t$ denotes the expression $\sum_{i=1}^r \int A^\ast_iY_t\, dB_t^i$, where $B^i, i=1,\cdots,r$ are the components of the Brownian motion $\{B_t\}$.

\begin{definition}\label{strong}
Let $p \in \R$ and $\psi \in \Sc_p\cap \E^\prime$. Let $q$ be such that $A^\ast:\Sc_p\cap\E^\prime(K)\rightarrow\mathcal{L}(\R^r,\Sc_q\cap\E^\prime(K))$ and $L^\ast:\Sc_p\cap\E^\prime(K)\rightarrow \Sc_q\cap\E^\prime(K)$ are bounded linear operators for each compact set $K$ in $\R^d$. We say that $\{Y_t\}$ is a $(p,q)$ strong solution of \eqref{eq5} if it is an $\Sc_p\cap\E^\prime$-valued adapted process, has continuous paths in $\Sc_p$, is locally of compact support and satisfies the following equation
in $\Sc_q$, a.s.,
\begin{equation}\label{eq6}
Y_t=\psi+\int_0^tA^\ast Y_s\cdot dB_s+\int_0^tL^\ast Y_sds
\end{equation}
for all $t\geq0$.
\end{definition}
\begin{remark}\label{loc-cpt-supp-leb} We note that the process in the third term in the right hand side of
equation \eqref{eq6} is also locally of compact support. The proof
is the same as in the proof of Proposition \ref{cpt-supp-st-intg}.
\end{remark}

\begin{theorem}[{\cite{MR2373102}}]\label{thm3}
Let $\psi\in\E^\prime$ have the representation \eqref{eq2}. Let $p>0$ be such that $\partial^{\gamma}\delta_x\in \Sc_{-p}$, $|\gamma|\leq N$. Let $q>p$
be as in Proposition \ref{prop1}. Then the $\Sc_{-p}$-valued continuous, adapted process $\{Z_t(\psi)\}_{t\geq0}$
defined by \eqref{eq4}, is a $(-p,-q)$ strong solution of \eqref{eq5}.
\end{theorem}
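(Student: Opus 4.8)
The plan is to verify, for $Y_t := Z_t(\psi)$, the four requirements of Definition \ref{strong} in the case where the exponents $(p,q)$ there are $(-p,-q)$. Three of these are already in hand. By Theorem \ref{thm2}, $\{Z_t(\psi)\}$ is $\Sc_{-p}$-valued, adapted, and has continuous paths in $\|\cdot\|_{-p}$; since $Z_t(\psi)=X_t^\ast(\psi)\in\E^\prime$, it is in fact $\Sc_{-p}\cap\E^\prime$-valued. Proposition \ref{specific-cpt-support} supplies that it is locally of compact support, with $\mathrm{supp}(Z_t(\psi))\subseteq\overline{B(0,R)}$ for $t\leq\tau_R$ and $\tau_R\uparrow\infty$. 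Thus the only substantive point is the integral equation \eqref{eq6} in $\Sc_{-q}$.

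First I would record that the two integrals on the right of \eqref{eq6} are well defined as $\Sc_{-q}$-valued processes. On $[0,\tau_R]$ the process $Z_s(\psi)$ takes values in $\Sc_{-p}\cap\E^\prime(\overline{B(0,R)})$, so Proposition \ref{prop1} applies with $K=\overline{B(0,R)}$ and yields $A^\ast Z_s(\psi)\in\mathcal{L}(\R^r,\Sc_{-q}\cap\E^\prime(\overline{B(0,R)}))$ and $L^\ast Z_s(\psi)\in\Sc_{-q}\cap\E^\prime(\overline{B(0,R)})$, together with the bounds $\|A^\ast Z_s\|_{HS(-q)}\leq C_1\|Z_s\|_{-p}$ and $\|L^\ast Z_s\|_{-q}\leq C_2\|Z_s\|_{-p}$. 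Continuity of $s\mapsto Z_s(\psi)$ in $\|\cdot\|_{-p}$ makes these integrands adapted and locally bounded in the Hilbert space $\Sc_{-q}$, so the Bochner integral $\int_0^t L^\ast Z_s\,ds$ and the $\Sc_{-q}$-valued It\^o integral $\int_0^t A^\ast Z_s\cdot dB_s$ exist on $[0,\tau_R]$; letting $R\uparrow\infty$ defines them for all $t$, and Proposition \ref{cpt-supp-st-intg} together with Remark \ref{loc-cpt-supp-leb} shows both are again locally of compact support.

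I would establish \eqref{eq6} weakly and then upgrade it. Since $\Sc_{-q}=\Sc_q^\prime$ and $C_c^\infty(\R^d)$ is dense in $\Sc_q$, it suffices to test against an arbitrary $\varphi\in\Sc$. Pairing \eqref{eq6} with $\varphi$ and using the adjoint relations $\langle A_i^\ast\mu,\varphi\rangle=\langle\mu,A_i\varphi\rangle$ and $\langle L^\ast\mu,\varphi\rangle=\langle\mu,L\varphi\rangle$ (which commute with the $\Sc_{-q}$-valued integrals because $\langle\cdot,\varphi\rangle$ is a bounded functional on $\Sc_{-q}$), the target becomes
\[\langle Z_t(\psi),\varphi\rangle=\langle\psi,\varphi\rangle+\sum_{i=1}^r\int_0^t\langle Z_s(\psi),A_i\varphi\rangle\,dB_s^i+\int_0^t\langle Z_s(\psi),L\varphi\rangle\,ds.\]
Using $\langle Z_s(\psi),\chi\rangle=\langle X_s^\ast\psi,\chi\rangle=\langle\psi,\chi\circ X(s,\cdot)\rangle$ for $\chi\in\{\varphi,A_i\varphi,L\varphi\}$, this display is precisely the image under $\psi$ of the pathwise It\^o formula
\[\varphi(X(t,x))=\varphi(x)+\sum_{i=1}^r\int_0^tA_i\varphi(X(s,x))\,dB_s^i+\int_0^tL\varphi(X(s,x))\,ds,\]
valid for each fixed $x$, since $A_i\varphi$ and $L\varphi$ are exactly the first- and second-order terms produced by It\^o's formula for $\varphi$ along the solution of \eqref{eq1}.

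Applying $\psi$ in the form \eqref{eq2}, namely $\langle\psi,u\rangle=\sum_{|\alpha|\leq N}(-1)^{|\alpha|}\int_V g_\alpha(x)\,\partial_x^\alpha u(x)\,dx$, to the three terms of this It\^o formula is where the work lies. One must (i) differentiate the It\^o formula in the spatial parameter $x$, which is legitimate because the flow $x\mapsto X(s,x)$ and its $x$-derivatives are smooth and solve the variational equations obtained from \eqref{eq1} (Theorem \ref{thm1}), so that $\partial_x^\alpha$ may be taken inside the $ds$- and $dB_s$-integrals; and (ii) interchange the finite operation $\sum_\alpha(-1)^{|\alpha|}\int_V g_\alpha\,\partial_x^\alpha(\cdot)\,dx$ with the time integrals. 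For the drift term this is ordinary Fubini, but for the martingale term it is a stochastic Fubini theorem, and this interchange --- justified by the local boundedness from Proposition \ref{prop1} and the support control from $\tau_R$ --- is the main obstacle. After it, the first term reproduces $\langle\psi,\varphi\rangle$, while the $dB_s^i$- and $ds$-integrands become $\langle\psi,(A_i\varphi)(X(s,\cdot))\rangle=\langle Z_s(\psi),A_i\varphi\rangle$ and $\langle\psi,(L\varphi)(X(s,\cdot))\rangle=\langle Z_s(\psi),L\varphi\rangle$, which is the displayed weak equation. As this holds for every $\varphi\in\Sc$ and both sides of \eqref{eq6} are $\Sc_{-q}$-valued, the weak identity upgrades to equality in $\Sc_{-q}$, finishing the proof that $\{Z_t(\psi)\}$ is a $(-p,-q)$ strong solution.
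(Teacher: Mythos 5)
The paper itself contains no proof of Theorem \ref{thm3}: it is quoted from \cite{MR2373102}, and the only ingredient this paper adds to the statement is the ``locally of compact support'' clause of Definition \ref{strong}, which is supplied separately as Proposition \ref{specific-cpt-support}. Your proposal is correct and assembles exactly the intended pieces: Theorem \ref{thm2} and Proposition \ref{specific-cpt-support} dispose of the first three requirements, and your reconstruction of the integral equation \eqref{eq6} follows the same route as the cited reference --- test against $\varphi\in\Sc$, apply It\^o's formula to $\varphi(X(t,x))$, differentiate in the spatial parameter, integrate against the $g_\alpha$ of \eqref{eq2}, and interchange via ordinary and stochastic Fubini --- so there is no genuinely different method to contrast. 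Two points deserve tightening. First, differentiating the It\^o formula in $x$ is not a consequence of Theorem \ref{thm1} alone, which gives only smoothness of the flow; it rests on Kunita's theorem on differentiation of stochastic integrals with respect to a parameter \cite{MR1472487}, which should be invoked explicitly. Second, the final upgrade from the weak identity to an identity in $\Sc_{-q}$ holding a.s. for \emph{all} $t$ needs the usual argument combining a countable dense family of test functions and rational times with the $\Sc_{-q}$-continuity of both sides of \eqref{eq6}. Both are standard and citable, so these are presentational rather than mathematical gaps.
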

\begin{remark} It was noted in \cite[Theorem 4.1]{MR2373102} that $p
> \tfrac{d}{4}+\tfrac{|\gamma|}{2}$ is a sufficient condition for
$\partial^{\gamma}\delta_x\in \Sc_{-p}$, for any multi-index
$\gamma$. Thus in the previous theorem, we can state an explicit
condition on $p$ as $p > \tfrac{d}{4}+\tfrac{N}{2}$ .
\end{remark}
\begin{proposition}[{\cite{MR2373102}}]\label{prp2}
Let $\psi\in\E^\prime$ with representation \eqref{eq2}. Let $p>\frac{d}{4}+\frac{N}{2}$ where $N=order(\psi)+2d$. Let $\{Z_t(\psi)\}$ be the $\Sc_{-p}$ valued
continuous adapted process defined by \eqref{eq4}. Then for all $T>0$,
\[\sup_{t\leq T}\Exp\|Z_t(\psi)\|^2_{-p}<\infty. \]
\end{proposition}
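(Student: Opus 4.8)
The plan is to control the second moment $\Exp\|Z_t(\psi)\|^2_{-p}$ uniformly over $t \leq T$ by estimating the explicit expression \eqref{eq4}. Recall that $Z_t(\psi)$ is a finite sum (over $|\alpha| \leq N$ and $|\gamma| \leq |\alpha|$) of integrals over the bounded set $V$ of terms of the form $g_\alpha(x)\, P_\gamma(\cdots)(t,x)\, \partial^\gamma \delta_{X(t,x)}$. Since there are only finitely many terms, it suffices by the triangle inequality (and the elementary bound $(\sum_{j=1}^m a_j)^2 \leq m \sum_{j=1}^m a_j^2$) to bound $\Exp$ of the squared $\|\cdot\|_{-p}$ norm of each individual integral term. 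So first I would reduce to estimating a single expression of the shape $\Exp\left\| \int_V g_\alpha(x)\, P_\gamma(\partial^\beta X)(t,x)\, \partial^\gamma \delta_{X(t,x)}\, dx \right\|^2_{-p}$.

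Next I would pull the norm inside the integral. Using the continuous (Minkowski-type) triangle inequality for the Bochner integral in the Hilbert space $\Sc_{-p}$, one gets
\[
\left\| \int_V g_\alpha(x) P_\gamma(\cdots)(t,x) \partial^\gamma \delta_{X(t,x)}\, dx \right\|_{-p} \leq \int_V |g_\alpha(x)|\, \big|P_\gamma(\cdots)(t,x)\big|\, \big\|\partial^\gamma \delta_{X(t,x)}\big\|_{-p}\, dx.
\]
The key analytic input is a uniform bound $\|\partial^\gamma \delta_y\|_{-p} \leq C$ for all $y \in \R^d$, valid whenever $p > \tfrac{d}{4} + \tfrac{|\gamma|}{2}$; this translation-type estimate is exactly the content of the hypothesis $p > \tfrac{d}{4}+\tfrac{N}{2}$ recorded in the preceding remark, which guarantees $\partial^\gamma \delta_x \in \Sc_{-p}$ and, I expect, a bound on $\|\partial^\gamma \delta_y\|_{-p}$ that is \emph{uniform in the base point} $y$. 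With $g_\alpha$ continuous and compactly supported in $V$ (hence bounded) and $V$ of finite Lebesgue measure, the remaining task is to control $\int_V |P_\gamma(\cdots)(t,x)|\, dx$ in $\mathcal{L}^2(P)$ uniformly in $t \leq T$.

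To handle the polynomial-in-derivatives factor $P_\gamma$, I would apply the Cauchy--Schwarz inequality in $x$ over the bounded domain $V$ to replace $\left(\int_V |\cdots|\, dx\right)^2$ by $|V| \int_V |P_\gamma(\cdots)(t,x)|^2\, dx$, then take expectation and use Fubini to reduce to $\sup_{t \leq T}\sup_{x \in V} \Exp\, |P_\gamma(\partial^{\beta^1} X_1, \ldots)(t,x)|^2 < \infty$. Since $P_\gamma$ is a fixed polynomial, this follows from finiteness of the moments of the spatial derivatives $\partial^\beta X(t,x)$ of the flow up to order $|\alpha| \leq N$, uniformly over $x$ in the bounded set $V$ and $t \in [0,T]$. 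These moment bounds are standard for flows generated by SDEs with $C^\infty$ coefficients having bounded derivatives (the derivative processes $\partial^\beta X$ solve linear SDEs with coefficients that are bounded derivatives of $\sigma,b$ evaluated along the flow), and are part of the Kunita-type regularity results underlying Theorem \ref{thm1}. I expect the main obstacle to be precisely the verification that these flow-derivative moments are finite and uniformly bounded on $[0,T] \times V$; once that is granted, assembling the finitely many bounds gives $\sup_{t \leq T}\Exp\|Z_t(\psi)\|^2_{-p} < \infty$ as required.
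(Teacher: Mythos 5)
Your proposal is correct, and since the paper states Proposition \ref{prp2} without proof (it is quoted from \cite{MR2373102}), the relevant comparison is with the argument in that reference, which follows the same route as yours: termwise estimation of \eqref{eq4}, Minkowski's inequality for the $\Sc_{-p}$-valued Bochner integral over $V$, the bound on $\|\partial^\gamma\delta_y\|_{-p}$ for $p>\frac{d}{4}+\frac{|\gamma|}{2}$, and Cauchy--Schwarz combined with Kunita-type moment estimates for the spatial derivatives of the flow, uniformly over $[0,T]\times V$. The one ingredient you flag as an expectation --- that the bound on $\|\partial^\gamma\delta_y\|_{-p}$ is uniform in the base point $y\in\R^d$ (not merely on compacts, which matters here since $X(t,x)$ is unbounded) --- is indeed true under $p>\frac{d}{4}+\frac{|\gamma|}{2}$ (it can be derived from Mehler's formula for the Hermite kernel) and is exactly the estimate underlying Theorem 4.1 of \cite{MR2373102}, so your argument is complete.
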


\section{Mild solutions}\label{s:3}
Let $\{S_t\}_{t\geq0}$ be the semigroup corresponding to
$\{X(t,x)\}$ solving \eqref{eq1} i.e. for $f\in\Sc$, $S_tf(x):=\Exp f(X(t,x))$. Then,
\begin{align*}
\mathopen{_{C^{\infty}}\langle}S_tf,\psi\rangle_{\E^\prime}&=\langle\Exp f\circ X_t,\psi\rangle=\Exp\langle X_t(f),\psi\rangle\\
&=\Exp\langle f,Z_t(\psi)\rangle=\mathopen{_{\Sc}\langle}f,\Exp Z_t(\psi)\rangle_{\Sc^\prime}.
\end{align*}

Consider the map $S_t^\ast :\E^\prime\rightarrow \Sc^\prime$ defined by $S_t^\ast \psi :=\Exp Z_t(\psi)$.
\begin{theorem}[{\cite{MR2373102}}]\label{thm4}
The following are the properties of the operators $S_t$ and $S_t^\ast$.
\begin{enumerate}[label=\alph*)]
\item We have $S_t:\Sc\rightarrow C^{\infty}$. The map $S_t^\ast :\E^\prime\rightarrow \Sc^\prime$ is adjoint to $S_t$ in the sense that
\[\mathopen{_{\Sc^\prime}\langle}S_t^\ast \psi,\phi\rangle_{\Sc} =\mathopen{_{\E^\prime}\langle}\psi,S_t\phi\rangle_{C^\infty}\]
for all $\psi\in\E^\prime$ and $\phi\in\Sc$.
\item Let $K\subset\R^d$ be a compact set and $p>0$. Then for $q>\frac{5}{4}d+[p]+1$, $S_t^\ast :\Sc_{-p}\cap\E^\prime(K)\rightarrow \Sc_{-q}$ is a bounded linear operator.
Further, for any $T>0$, there exists a constant $C(T), 0 < C(T) <
\infty$ such that
\[\sup_{t\leq T}\|S_t^\ast \|_H<C(T)\]
where $\|\cdot\|_H$ is the operator norm on the Banach space $H$ of bounded linear operators from $\Sc_{-p}\cap\E^\prime(K)$ to $\Sc_{-q}$.
\end{enumerate}
\end{theorem}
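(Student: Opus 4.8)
The plan is to treat the two assertions separately, since part a) is essentially the chain of equalities already displayed just before the statement, while part b) requires genuine estimates on the stochastic flow. For part a), to see that $S_t:\Sc\to C^\infty$ I would fix $f\in\Sc$ and differentiate $S_tf(x)=\Exp f(X(t,x))$ under the expectation. Since $x\mapsto X(t,x)$ is a.s. a $C^\infty$ diffeomorphism (Theorem \ref{thm1}) whose spatial derivatives have moments that are locally bounded in $x$, the chain rule together with dominated convergence justifies differentiating through $\Exp$ to all orders, so $S_tf\in C^\infty$. The adjoint identity is then exactly the recorded computation: writing $S_tf=\Exp(f\circ X_t)$, interchanging $\Exp$ with the continuous linear pairing against $\psi\in\E^\prime$ by Fubini, using $\langle X_t(f),\psi\rangle=\langle f,X_t^\ast\psi\rangle=\langle f,Z_t(\psi)\rangle$ (Theorem \ref{thm2}), and interchanging $\Exp$ once more gives $\langle S_tf,\psi\rangle=\langle f,\Exp Z_t(\psi)\rangle=\langle f,S_t^\ast\psi\rangle$ by the definition $S_t^\ast\psi:=\Exp Z_t(\psi)$. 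The only delicate points are the integrability bounds licensing the two applications of Fubini, which follow from Proposition \ref{prp2}.

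For part b), the starting point is $S_t^\ast\psi=\Exp Z_t(\psi)=\Exp X_t^\ast\psi$, so Jensen's inequality for the Bochner integral in $\Sc_{-q}$ gives $\|S_t^\ast\psi\|_{-q}\leq\Exp\|X_t^\ast\psi\|_{-q}$. I would bound $\|X_t^\ast\psi\|_{-q}$ pathwise by duality, $\|X_t^\ast\psi\|_{-q}=\sup_{\|\phi\|_q\leq1}|\langle\psi,X_t\phi\rangle|$. Because $\psi\in\E^\prime(K)$ I may insert a fixed cutoff $\chi\in C_c^\infty$ with $\chi\equiv1$ on a neighbourhood of $K$ and support in a fixed compact $\tilde K$, so that $\langle\psi,X_t\phi\rangle=\langle\psi,\chi\,X_t\phi\rangle$ and hence $|\langle\psi,X_t\phi\rangle|\leq\|\psi\|_{-p}\,\|\chi\,X_t\phi\|_p$. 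The whole estimate thus reduces to an $\omega$-wise operator bound $\|\chi\,X_t(\omega)\phi\|_p\leq C(t,\omega)\|\phi\|_q$ with $\sup_{t\leq T}\Exp C(t,\cdot)<\infty$: granting this, $\|X_t^\ast\psi\|_{-q}\leq\|\psi\|_{-p}C(t,\omega)$ pathwise, and taking expectations yields $\|S_t^\ast\|_H\leq\Exp C(t,\cdot)$, whence $\sup_{t\leq T}\|S_t^\ast\|_H\leq\sup_{t\leq T}\Exp C(t,\cdot)=:C(T)<\infty$.

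To produce $C(t,\omega)$ I would use that for a function supported in $\tilde K$ the norm $\|\cdot\|_p$ is controlled by finitely many $L^2(\tilde K)$-norms of derivatives up to order $2([p]+1)$, since $\|g\|_p=\|(-\Delta+|x|^2)^p g\|_{L^2}$ and the harmonic-oscillator weights are bounded on $\tilde K$, and then expand $\partial^\alpha(\chi\,X_t\phi)$ by the Leibniz and chain rules. Each resulting term is a derivative $(\partial^\mu\phi)(X(t,x))$, with $|\mu|\leq 2([p]+1)$, multiplied by a polynomial in the spatial derivatives $\partial^\beta X(t,x)$, $1\leq|\beta|\leq 2([p]+1)$. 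The factor $(\partial^\mu\phi)(X(t,x))$ is dominated using the Hermite-Sobolev bound $\|\partial^\mu\phi\|_\infty\leq c\,\|\phi\|_q$, valid once $q$ exceeds the threshold forcing $\partial^\mu\delta_y\in\Sc_{-q}$ uniformly in $y$; tracking this threshold against $|\mu|\leq2([p]+1)$ together with the integration over $\tilde K$ is what produces the stated condition $q>\tfrac54 d+[p]+1$. The polynomial factors are absorbed into $C(t,\omega):=c\,|\tilde K|^{1/2}\sum\sup_{x\in\tilde K}|P(\partial^\beta X)(t,x)|$, noting that $X(t,x)$ itself enters only as the argument of $\partial^\mu\phi$ and is killed by the sup-norm, so no moment of the undifferentiated flow is needed.

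The main obstacle is the last input: showing $\sup_{t\leq T}\Exp C(t,\cdot)<\infty$, i.e. that the relevant spatial derivatives of the flow have moments that are finite and, in fact, bounded uniformly for $t\leq T$ and $x$ ranging over $\tilde K$. This is the quantitative content of Kunita's theory of stochastic flows: under the standing hypotheses ($C^\infty$ coefficients with bounded derivatives and linear growth) the derivative processes $\partial^\beta X(t,x)$, $|\beta|\geq1$, solve linear stochastic equations and have all moments bounded on $[0,T]$, and a Kolmogorov–Sobolev argument upgrades these to moment bounds for $\sup_{x\in\tilde K}|\partial^\beta X(t,x)|$. Establishing these uniform moment estimates, and checking that they are uniform in $t\leq T$ and independent of the particular compact set $K$ (so that the resulting constants, and $C(T)$, do not depend on $K$), is the technically delicate part of the argument.
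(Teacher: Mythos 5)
You should note at the outset that this paper contains no proof of Theorem \ref{thm4}: it is recalled verbatim from \cite[Theorem 4.8]{MR2373102}, so the only meaningful comparison is with the argument of that reference, whose ingredients are exactly the ones recalled in Section \ref{s:2}. That argument works on the side of the distribution: it represents $\psi$ via the structure theorem \eqref{eq2}, takes expectations in the explicit formula \eqref{eq4} for $Z_t(\psi)$, and estimates $\Exp\|Z_t(\psi)\|_{-q}$ using the uniform bound $\sup_{y}\|\partial^{\gamma}\delta_{y}\|_{-q}<\infty$ for $q>\tfrac{d}{4}+\tfrac{|\gamma|}{2}$ together with Kunita's moment estimates for $\sup_{x}|\partial^{\beta}X(t,x)|$ over compacts; the stated threshold is precisely $\tfrac{d}{4}+\tfrac{N}{2}$ with $N=\text{order}(\psi)+2d\leq 2[p]+2+2d$, i.e. the summand $d$ inside $\tfrac{5}{4}d=\tfrac{d}{4}+d$ is the price of the structure theorem. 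Your proof is correct in outline but takes a genuinely different route: you work by duality on the test-function side, with a cutoff $\chi$, a Leibniz--Fa\`a di Bruno expansion of $\partial^{\alpha}(\chi\,\phi(X(t,\cdot)))$, the same uniform Dirac estimates (in the form $\|\partial^{\mu}\phi\|_{\infty}\leq c\|\phi\|_{q}$), and the same Kunita moment bounds. This avoids the structure theorem entirely, and --- contrary to what you assert --- your bookkeeping does \emph{not} reproduce the stated condition: it yields the better threshold $q>\tfrac{d}{4}+[p]+1$. This is not an error, since $\|\cdot\|_{-q}\leq\|\cdot\|_{-q^{\prime}}$ whenever $q\geq q^{\prime}$, so boundedness into $\Sc_{-q^{\prime}}$ implies boundedness into $\Sc_{-q}$ for every larger $q$ and your argument therefore covers the stated range; but you should attribute the $\tfrac{5}{4}d$ to the structure-theorem route rather than claim your count produces it. Finally, the two nontrivial inputs you leave as black boxes --- uniformity in $y$ of $\|\partial^{\mu}\delta_{y}\|_{-q}$ (this uniformity is essential and is the content of \cite[Theorem 4.1]{MR2373102}) and the bounds $\sup_{t\leq T}\Exp\sup_{x\in\tilde{K}}|\partial^{\beta}X(t,x)|^{m}<\infty$ from \cite{MR1472487} --- are the same ones the reference relies on, so treating them as cited results is legitimate, and you correctly identify the latter as the technical core of part b).
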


As a consequence of Proposition \ref{cpt-supp-st-intg}, we get the next result.

\begin{corollary}
Let $
Y = (Y^1,\cdots,Y^r)$, where each $\{Y_t^i\}$ is an $\Sc_{-p}\cap\E^\prime$ valued adapted process with continuous paths in $\Sc_{-p}$ and is locally of compact support. Let $q$ be as in Theorem \ref{thm4}. Then for each $i$, $\{S^\ast _{t-s}Y_s^i\}_{s \in [0,t]}$ is an $\Sc_{-q}$ valued continuous adapted process and the process $\{\int_0^tS^\ast _{t-s}Y_s\cdot dB_s\}$ is an $\Sc_{-q}$ valued continuous local martingale. Here the term $\int_0^t S^\ast _{t-s} Y_s\cdot dB_s$ denotes the sum $\sum_{i=1}^r \int_0^t S^\ast _{t-s} Y^i_s dB_s^i$, where $B^i, i=1,\cdots,r$ are the components of the Brownian motion $\{B_t\}$.
\end{corollary}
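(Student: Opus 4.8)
The plan is to verify the two assertions in turn: first that the integrand $\{S^\ast_{t-s}Y^i_s\}_{s\in[0,t]}$ is a well-defined $\Sc_{-q}$-valued continuous adapted process, and then to build the stochastic integral by the localization device already used in Proposition \ref{cpt-supp-st-intg}. Fix $t>0$ and take the localizing data supplied by the hypothesis: an increasing sequence of stopping times $\tau_n\uparrow\infty$ and an increasing family of compacts $K_n$ with $supp((Y^i)^{\tau_n}_s)\subseteq K_n$ for all $s$ and all $i$, almost surely. As in Proposition \ref{cpt-supp-st-intg}, I would further shrink the $\tau_n$ using path continuity so that in addition $\|(Y^i)^{\tau_n}_s\|_{-p}\leq n$ for all $s\geq0$ and all $i$.

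Well-definedness and adaptedness are immediate. For $s\leq\tau_n$ we have $Y^i_s\in\Sc_{-p}\cap\E^\prime(K_n)$, so Theorem \ref{thm4}(b), applied with the fixed compact $K_n$, gives $S^\ast_{t-s}Y^i_s\in\Sc_{-q}$ together with $\|S^\ast_{t-s}Y^i_s\|_{-q}\leq C_n(t)\|Y^i_s\|_{-p}$, where $C_n(t):=\sup_{u\leq t}\|S^\ast_u\|_H<\infty$ (the constant is allowed to depend on the fixed compact $K_n$). Since the time shift $t-s$ is deterministic and $Y^i_s$ is $\F_s$-measurable, $S^\ast_{t-s}Y^i_s$ is $\F_s$-measurable, so the integrand is adapted.

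The delicate point is continuity in $\Sc_{-q}$, for which I would split
\[S^\ast_{t-s}Y^i_s-S^\ast_{t-s'}Y^i_{s'}=S^\ast_{t-s}\bigl(Y^i_s-Y^i_{s'}\bigr)+\bigl(S^\ast_{t-s}-S^\ast_{t-s'}\bigr)Y^i_{s'}.\]
For $s,s'\leq\tau_n$ the difference $Y^i_s-Y^i_{s'}$ again lies in $\Sc_{-p}\cap\E^\prime(K_n)$, so the first term is bounded by $C_n(t)\|Y^i_s-Y^i_{s'}\|_{-p}\to0$ by the assumed $\|\cdot\|_{-p}$-continuity of $Y^i$. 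The second term forces the real issue, namely the strong continuity $u\mapsto S^\ast_u\psi$ in $\Sc_{-q}$ for the fixed distribution $\psi=Y^i_{s'}(\omega)\in\E^\prime(K_n)$. This I would deduce from $S^\ast_u\psi=\Exp Z_u(\psi)$: by Theorem \ref{thm2} the paths $u\mapsto Z_u(\psi)$ are continuous in the relevant Hermite--Sobolev norm, while Proposition \ref{prp2} gives a uniform second-moment bound there, so the increments $\|Z_u(\psi)-Z_{u_0}(\psi)\|$ are uniformly integrable and tend to $0$ almost surely; hence $\|\Exp Z_u(\psi)-\Exp Z_{u_0}(\psi)\|\leq\Exp\|Z_u(\psi)-Z_{u_0}(\psi)\|\to0$, which transfers to $\Sc_{-q}$ through the continuous inclusions of the Hermite--Sobolev scale (using $q>p$). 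Matching the Sobolev index at which Proposition \ref{prp2} is applied with the target $\Sc_{-q}$ of Theorem \ref{thm4}(b) is the delicate bookkeeping here. Combining the two terms gives continuity of $s\mapsto S^\ast_{t-s}Y^i_s$ on $[0,t\wedge\tau_n]$, and letting $n\to\infty$, on all of $[0,t]$.

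Finally, for the integral itself, the bound $\|S^\ast_{t-s}Y^i_s\|_{-q}\leq C_n(t)\,n$ on $[0,t\wedge\tau_n]$ shows the continuous adapted integrand is bounded there, so the It\^o integral $\{\int_0^u S^\ast_{t-s}Y_s\cdot dB_s\}_{u\leq t\wedge\tau_n}$ against the finite-dimensional Brownian motion exists as a square-integrable continuous $\Sc_{-q}$-valued martingale, exactly as in Proposition \ref{cpt-supp-st-intg}. These stopped integrals are consistent in $n$, and since $\tau_n\uparrow\infty$ they patch into the continuous $\Sc_{-q}$-valued process $\{\int_0^u S^\ast_{t-s}Y_s\cdot dB_s\}_{u\in[0,t]}$, which is a local martingale with localizing sequence $\{\tau_n\}$. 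I expect the one genuine obstacle to be the strong continuity of the adjoint semigroup invoked in the third paragraph; well-definedness, adaptedness, and the localization producing the integral are routine given Theorem \ref{thm4}(b) and the method of Proposition \ref{cpt-supp-st-intg}.
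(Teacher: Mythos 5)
Your overall strategy---localize using the stopping times and compact sets furnished by local compact support, bound the integrand via Theorem \ref{thm4}(b), and then run the construction of Proposition \ref{cpt-supp-st-intg}---is exactly what the paper intends: the paper gives no proof at all beyond the sentence that the corollary is ``a consequence of Proposition \ref{cpt-supp-st-intg}''. Your localization, the adaptedness argument, the bound $\|S^\ast_{t-s}Y^i_s\|_{-q}\leq C_n(t)\,n$ on $[0,t\wedge\tau_n]$, and the patching of the stopped square-integrable martingales into an $\Sc_{-q}$-valued continuous local martingale are all correct.

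The one genuine gap is the step you yourself flagged: the transfer of the continuity of $u\mapsto S^\ast_u\psi$ into $\Sc_{-q}$, which you justify by ``the continuous inclusions of the Hermite--Sobolev scale (using $q>p$)''. That condition is not the relevant one. Theorem \ref{thm2} and Proposition \ref{prp2} do not apply at the index $p$: for a fixed $\psi=Y^i_{s'}(\omega)\in\Sc_{-p}\cap\E^\prime(K_n)$ they apply at indices $\tilde p$ with $\partial^\gamma\delta_x\in\Sc_{-\tilde p}$ for $|\gamma|\leq N$, where $N=\mathrm{order}(\psi)+2d$, i.e.\ $\tilde p>\tfrac{d}{4}+\tfrac{N}{2}$, and this threshold is in general strictly larger than $p$ (roughly $p+\tfrac{5d}{4}$). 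What you actually need is $\tilde p\leq q$, and that requires a uniform bound on the order of elements of $\Sc_{-p}\cap\E^\prime(K_n)$, which appears nowhere in the paper and nowhere in your argument. It is true and closes the gap: for $\phi\in C^\infty_c$ supported in a fixed bounded neighbourhood $V$ of $K_n$ one has $\|\phi\|_p\leq\|\phi\|_{\lceil p\rceil}=\|(-\Delta+|x|^2)^{\lceil p\rceil}\phi\|_0\leq C_V\max_{|\alpha|\leq 2\lceil p\rceil}\sup|D^\alpha\phi|$, so $\mathrm{order}(\psi)\leq 2\lceil p\rceil$ uniformly on $\Sc_{-p}\cap\E^\prime(K_n)$; hence $\tfrac{d}{4}+\tfrac{N}{2}\leq\lceil p\rceil+\tfrac{5d}{4}\leq[p]+1+\tfrac{5d}{4}<q$, so you may take $\tilde p=q$ and run your uniform-integrability argument directly in $\Sc_{-q}$. (An alternative that avoids order bookkeeping entirely: $u\mapsto S^\ast_u\psi$ is weak-$\ast$ continuous since $\langle S^\ast_u\psi,\phi\rangle=\langle\psi,S_u\phi\rangle$, it is bounded in $\Sc_{-\tilde q}$ for some $\tilde q$ with $\tfrac{5}{4}d+[p]+1<\tilde q<q$ by Theorem \ref{thm4}(b), and the inclusion $\Sc_{-\tilde q}\hookrightarrow\Sc_{-q}$ is compact, which upgrades weak-$\ast$ continuity to norm continuity in $\Sc_{-q}$.) Note also that the seemingly natural route via Lemma \ref{lema1}(iii) and Proposition \ref{prop-nrm}, i.e.\ Lipschitz continuity of $u\mapsto S^\ast_u$, is \emph{not} available here, because it requires the intermediate index $p^\prime>[p]+4$ and hence a $q$ strictly larger than the one permitted by the corollary's hypothesis that $q$ be as in Theorem \ref{thm4}; so your route (or the compactness route) is the right one, it just needs the missing order estimate to be completed.
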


In what follows, $p$ will denote an arbitrary but fixed non-negative real number. We also associate two positive real numbers $p^\prime$ and $q$ to this $p$. By Proposition \ref{prop1}, we can choose $p^\prime>[p]+4$ such that
\begin{equation}\label{L*}
L^\ast :\Sc_{-p}\cap\E^\prime\big(\overline{B(0,R)}\big)\rightarrow \Sc_{-p^\prime}\cap\E^\prime\big(\overline{B(0,R)}\big)
\end{equation}
and
\begin{equation}\label{A*}
A^\ast :\Sc_{-p}\cap\E^\prime\big(\overline{B(0,R)}\big)\rightarrow\mathcal{L}(\R^r,\Sc_{-p^\prime}\cap\E^\prime\big(\overline{B(0,R)}\big))
\end{equation}
are bounded linear operators for any $R > 0$. Now, by Theorem \ref{thm4}, we can choose $q>\frac{5}{4}d+[p^\prime]+1$ such that
\begin{equation}\label{S*}
S_t^\ast :\Sc_{-p^\prime}\cap\E^\prime\big(\overline{B(0,R)}\big)\rightarrow \Sc_{-q}
\end{equation}
is a bounded linear operator for any $R > 0$. Note that $0<p<p^\prime<q$.

\begin{lemma}\label{lema1}
For $x\in \Sc_{-p}\cap\E^\prime$
\begin{enumerate}[label=(\roman*)]
\item
\[L^\ast x=\lim_{t\rightarrow0+}\frac{S_t^\ast x-x}{t},\]
\item
\[\frac{d}{dt}S_t^\ast x=S_t^\ast L^\ast x, \]
\item
\[S_t^\ast x-S_s^\ast x=\int_s^tS_u^\ast L^\ast xdu.\]
\end{enumerate}
\end{lemma}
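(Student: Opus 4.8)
The plan is to prove the $\Sc_{-q}$-valued identity (iii) first, then to obtain (ii) by differentiating and (i) as the special case $s=0$, $t\to 0+$. The entire argument is carried out by testing against an arbitrary $\phi\in\Sc$, reducing each claim to a scalar statement, and then lifting it back to $\Sc_{-q}$ using the boundedness of $S_u^\ast$ from Theorem~\ref{thm4}(b). The organising principle, which dictates the precise form of the right-hand sides, is that $S_u^\ast x$ need \emph{not} lie in $\E'$; hence one may not apply $L^\ast$ or a further $S_v^\ast$ to it. Every manipulation must keep $L^\ast$ acting directly on $x$, where by \eqref{L*} it produces the compactly supported element $L^\ast x\in\Sc_{-p'}\cap\E'$, to which $S_u^\ast$ may then legitimately be applied, landing in $\Sc_{-q}$ by \eqref{S*}.

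For the weak identity, fix $\phi\in\Sc$. Since $S_u\phi(\cdot)=\Exp\phi(X(u,\cdot))$, It\^o's formula gives $S_t\phi-S_s\phi=\int_s^t S_u(L\phi)\,du$ in $C^\infty$, and the commutation $S_uL\phi=LS_u\phi$ (valid for $\phi\in\Sc$ by the smoothness and bounded derivatives of the coefficients) lets me rewrite the integrand as $LS_u\phi$, with $u\mapsto LS_u\phi$ continuous in $C^\infty$. Pairing with $x\in\E'$, exchanging $\langle x,\cdot\rangle$ with $\int_s^t\cdot\,du$ (legitimate since $x$ has finite order and the integrand is continuous in $C^\infty$), using the definition of $L^\ast$ as the adjoint of $L$, and finally the adjoint relation of Theorem~\ref{thm4}(a), I obtain
\[\langle S_t^\ast x-S_s^\ast x,\phi\rangle=\langle x,S_t\phi-S_s\phi\rangle=\int_s^t\langle L^\ast x,S_u\phi\rangle\,du=\int_s^t\langle S_u^\ast L^\ast x,\phi\rangle\,du.\]
It is essential that $L$ be moved onto $\phi$ before pairing with $x$, so that $L^\ast$ lands on $x$ rather than on $S_u^\ast x$.

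To pass from this scalar identity to the claimed equality in $\Sc_{-q}$, I would show that $u\mapsto S_u^\ast L^\ast x$ is a continuous $\Sc_{-q}$-valued map on each $[0,T]$; then $\int_s^t S_u^\ast L^\ast x\,du$ exists as a $\Sc_{-q}$-valued Bochner integral, it may be pulled out of the pairing, and since $\phi\in\Sc$ is arbitrary and $\Sc$ is dense in $\Sc_q$, identity (iii) follows. This continuity is the technical heart of the proof and the main obstacle. I would derive it from the probabilistic representation $S_u^\ast(L^\ast x)=\Exp Z_u(L^\ast x)$, estimating $\|S_u^\ast L^\ast x-S_{u_0}^\ast L^\ast x\|_{-q}\le\Exp\|Z_u(L^\ast x)-Z_{u_0}(L^\ast x)\|_{-q}$: the paths $u\mapsto Z_u(L^\ast x)$ are continuous in a Hermite-Sobolev norm continuously embedded in $\|\cdot\|_{-q}$, so the integrand tends to $0$ almost surely, while the uniform second-moment bound of Proposition~\ref{prp2} supplies the uniform integrability needed to pass the limit through the expectation by dominated convergence. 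Integrability of the integrand on $[s,t]$ is then immediate from the uniform operator bound $\sup_{u\le T}\|S_u^\ast\|_H<C(T)$ of Theorem~\ref{thm4}(b).

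Once (iii) is established with a continuous integrand, the fundamental theorem of calculus for Bochner integrals yields $\tfrac{d}{dt}S_t^\ast x=S_t^\ast L^\ast x$ in $\Sc_{-q}$, which is (ii). Finally, taking $s=0$ in (iii) and noting that $S_0^\ast=\mathrm{Id}$ (since $S_0\phi=\phi$, whence $S_0^\ast L^\ast x=L^\ast x$), I have $\tfrac1t(S_t^\ast x-x)=\tfrac1t\int_0^tS_u^\ast L^\ast x\,du$, which converges to $L^\ast x$ as $t\to0+$ by continuity of the integrand at $u=0$; this is (i).
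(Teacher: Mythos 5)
Your proposal is correct and follows the same route the paper intends: the paper's entire proof is the one-line remark that the lemma ``follows from standard duality arguments,'' and your argument is precisely that duality argument carried out in full --- pairing with $\phi\in\Sc$, using Kolmogorov's equation $S_t\phi-S_s\phi=\int_s^t LS_u\phi\,du$ to keep $L^\ast$ acting on $x$ (so that compact supports are preserved and Theorem~\ref{thm4}(a) applies), and then lifting the scalar identity to $\Sc_{-q}$. The additional material you supply (the norm-continuity of $u\mapsto S_u^\ast L^\ast x$ via the representation $S_u^\ast\psi=\Exp Z_u(\psi)$, path continuity of $Z$, and the moment bound of Proposition~\ref{prp2}) is exactly the kind of detail the paper suppresses, and it is sound.
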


\begin{proof}
The proof follows from standard duality arguments.
\end{proof}

\begin{definition}\label{defn1}
Let $\psi \in \Sc_{-p}\cap \E^\prime$. We say that $\{Y_t\}$ is a $(-p,-q)$ mild solution of \eqref{eq5} if it is an $\Sc_{-p}\cap\E^\prime$-valued  adapted process, with continuous paths in $\Sc_{-p}$ and is locally of compact support and satisfies the following equation
in $\Sc_{-q}$, a.s.,
\begin{equation}\label{eq7}
Y_t=S_t^\ast \psi+\int_0^t S^\ast _{t-s}A^\ast Y_s\cdot dB_s
\end{equation}
for all $t\geq 0$. \end{definition}

\begin{remark}
As mentioned in Remark \ref{loc-cpt-supp-leb}, if $\{Y_t\}$ is a
strong solution, then all the terms in \eqref{eq6} are locally of
compact support (see Proposition \ref{cpt-supp-st-intg}). For an
arbitrary $\psi \in \E^\prime$, the distribution $S_t^\ast \psi$
need not be compactly supported. To see this, take $r=d, b(\cdot)
\equiv 0, \psi = \delta_x$ and $\sigma(\cdot) \equiv I_d$, the
identity matrix. Then $Z_t(\psi) = \delta_{X(t,x)} =
\delta_{B_t+x}$. Then $S^\ast_t\psi$ is not compactly supported. As
such, if $\{Y_t\}$ is a mild solution, then the terms on the right
hand side of \eqref{eq7} need not be locally of compact support.
\end{remark}

\begin{proposition}\label{prop-nrm}
For each $R>0$, the map $t \mapsto S_t^\ast $ is of finite variation in the operator norm $\|\cdot\|_{\mathcal{L}(\Sc_{-p}\cap\E^\prime\big(\overline{B(0,R)}\big),\Sc_{-q})}$. In particular, For all $x\in \Sc_{-p}\cap\E^\prime$, the map $t \mapsto S_t^\ast x$ is of finite variation in $\|\cdot\|_{-q}$ norm.
\end{proposition}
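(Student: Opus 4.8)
The plan is to reduce the statement to a Lipschitz estimate for $t\mapsto S_t^\ast$ in the operator norm, from which finite variation follows at once by summing over an arbitrary partition. Fix $R>0$ and $T>0$ and abbreviate $K=\overline{B(0,R)}$. For $x\in\Sc_{-p}\cap\E^\prime(K)$ and $0\le s\le t\le T$, I would start from the integral identity of Lemma \ref{lema1}(iii),
\[
S_t^\ast x-S_s^\ast x=\int_s^t S_u^\ast L^\ast x\,du,
\]
interpreted as a Bochner integral in $\Sc_{-q}$; the measurability and integrability of $u\mapsto S_u^\ast L^\ast x$ needed to make sense of it are already subsumed in that lemma.

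The second step is to bound the integrand uniformly in $u\in[0,T]$. The key point is that $L^\ast$, being a differential operator with smooth coefficients, does not enlarge supports, so $L^\ast x\in\Sc_{-p^\prime}\cap\E^\prime(K)$ and, by Proposition \ref{prop1}, $\|L^\ast x\|_{-p^\prime}\le C_2(p)\|x\|_{-p}$ with $C_2(p)$ independent of $K$. Applying Theorem \ref{thm4}(b) to the map \eqref{S*} on this fixed $K$ (legitimate since $q>\tfrac54 d+[p^\prime]+1$) gives $\|S_u^\ast L^\ast x\|_{-q}\le C(T)\,\|L^\ast x\|_{-p^\prime}\le C(T)\,C_2(p)\,\|x\|_{-p}$ for every $u\le T$. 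Substituting this into the integral and taking the supremum over $x$ in the unit ball of $\Sc_{-p}\cap\E^\prime(K)$ yields
\[
\big\|S_t^\ast-S_s^\ast\big\|_{\mathcal{L}(\Sc_{-p}\cap\E^\prime(K),\,\Sc_{-q})}\le C(T)\,C_2(p)\,(t-s),
\]
a Lipschitz bound in the operator norm on $[0,T]$.

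Summing this bound over any partition $0=t_0<\dots<t_n=T$ gives total variation on $[0,T]$ at most $C(T)\,C_2(p)\,T<\infty$; as $T$ is arbitrary, $t\mapsto S_t^\ast$ is of finite variation on every compact interval in the stated operator norm. For the \emph{in particular} clause I would fix $x\in\Sc_{-p}\cap\E^\prime$, choose $R$ with $supp(x)\subseteq\overline{B(0,R)}$, and apply the displayed bound (equivalently, the per-element estimate) to conclude that $t\mapsto S_t^\ast x$ is of finite variation in $\|\cdot\|_{-q}$.

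I expect no genuine difficulty in the estimates once the cited results are invoked; the two points that require care are checking that $L^\ast$ keeps the support inside $K$, so that the composition $S_u^\ast\circ L^\ast$ is defined with bounds uniform over $\E^\prime(K)$, and justifying the passage from the pointwise-in-$x$ inequality to the operator-norm inequality by taking the supremum over the unit ball.
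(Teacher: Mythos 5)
Your proposal is correct and follows essentially the same route as the paper: both rest on Lemma \ref{lema1}(iii) to write $S_t^\ast x-S_s^\ast x=\int_s^t S_u^\ast L^\ast x\,du$, then combine the support-preserving bound $\|L^\ast x\|_{-p^\prime}\leq C_2(p)\|x\|_{-p}$ from Proposition \ref{prop1} with the uniform operator bound $C(T)$ from Theorem \ref{thm4}(b) to get the per-increment estimate $C(T)C_2(p)(t-s)$, and finally sum over a partition. The only cosmetic difference is that you package the estimate as a Lipschitz bound before summing, whereas the paper bounds each partition increment directly inside the sum.
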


\begin{proof}
Let $\Pi=\{0=t_0<t_1<t_2\cdots<t_n=t\}$ be a partition of $[0,t]$. Observe that \begin{align*}
&\sum_{i=0}^{n-1}\left\|S^\ast _{t_{i+1}}-S^\ast _{t_i}\right\|_{\mathcal{L}(\Sc_{-p}\cap\E^\prime\big(\overline{B(0,R)}\big),\Sc_{-q})}\\
&=\sum_{i=0}^{n-1}\sup_{\substack{\|x\|_{-p}\leq1,\\ x \in \E^\prime\big(\overline{B(0,R)}\big)}}\|S^\ast _{t_{i+1}}x-S^\ast _{t_i}x\|_{-q}\\
&=\sum_{i=0}^{n-1}\sup_{\substack{\|x\|_{-p}\leq1,\\ x \in \E^\prime\big(\overline{B(0,R)}\big)}}\left\|\int_{t_i}^{t_{i+1}}S_s^\ast  L^\ast  xds\right\|_{-q}\\
&\leq\sum_{i=0}^{n-1}\sup_{\substack{\|x\|_{-p}\leq1,\\ x \in \E^\prime\big(\overline{B(0,R)}\big)}}\int_{t_i}^{t_{i+1}}\|S_s^\ast  L^\ast  x\|_{-q}ds\\
&\leq\sum_{i=0}^{n-1}\sup_{\substack{\|x\|_{-p}\leq1,\\ x \in \E^\prime\big(\overline{B(0,R)}\big)}}\int_{t_i}^{t_{i+1}}C(T)\|L^\ast x\|_{-p^\prime}ds\ \ \text{[by b) of Theorem \ref{thm4}]}\\
&\leq\sum_{i=0}^{n-1}\sup_{\substack{\|x\|_{-p}\leq1,\\ x \in \E^\prime\big(\overline{B(0,R)}\big)}}\int_{t_i}^{t_{i+1}}C(T)C_2(p)\|x\|_{-p}ds\ \ [\text{by Proposition}\ \ref{prop1}]\\
&\leq\sum_{i=0}^{n-1}\sup_{\substack{\|x\|_{-p}\leq1,\\ x \in \E^\prime\big(\overline{B(0,R)}\big)}}C(T)C_2(p)(t_{i+1}-t_i)\|x\|_{-p}\\
&\leq C(T)C_2(p)\sum_{i=0}^{n-1}(t_{i+1}-t_i)\\
&=C(T)C_2(p) t.
\end{align*}
Hence the proof.
\end{proof}

Let $R>0$. Let $t \in [0,\infty)\mapsto y_t$ be an
$\Sc_{-p}\cap\E^\prime\big(\overline{B(0,R)}\big)$ valued continuous
map. Let $\Pi_m = \{0=t^m_0<t^m_1<t^m_2\cdots<t^m_n=t\}$ be a
sequence of partitions of $[0,t]$ such that $|\Pi_m| = \max_i
|t_{i+1}^m -t_i^m| \to 0$ as $m \to \infty$. Let us consider the
simple functions $s \mapsto
y^m_s:=\sum_{i=0}^{n-1}\mathbbm{1}_{(t^m_i,t^m_{i+1}]}(s)
y_{t_i^m}$. Define
\begin{equation}\label{intg-dfn}
\int_0^tdS_s^\ast y_s := \lim_{m \to \infty} \int_0^tdS_s^\ast y^m_s
:=\lim_{m \to \infty}\sum_{i=0}^{n-1}[S^\ast _{t^m_{i+1}}-S^\ast
_{t^m_i}][y_{t^m_i}]
\end{equation}

\begin{proposition}\label{prop-int}
The limit in \eqref{intg-dfn} exists as an element of $\Sc_{-q}$ and is independent of the sequence of partitions chosen.
\end{proposition}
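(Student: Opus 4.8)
The plan is to interpret \eqref{intg-dfn} as a Riemann--Stieltjes integral of the continuous integrand $s \mapsto y_s$ against the operator-valued integrator $s \mapsto S_s^\ast$, which has finite variation by Proposition \ref{prop-nrm}. Since $\Sc_{-q}$ is a Hilbert space, hence complete, it suffices to prove that the approximating sums $R_\Pi := \sum_i [S_{t_{i+1}}^\ast - S_{t_i}^\ast][y_{t_i}]$ form a Cauchy net as the mesh $|\Pi| \to 0$; the same estimate will simultaneously give independence of the chosen sequence of partitions, since any two sequences with vanishing mesh then produce sums whose difference tends to $0$.

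First I would record two facts. As $y$ is continuous on the compact interval $[0,t]$, it is uniformly continuous there; let $\omega(h) := \sup\{\|y_u - y_v\|_{-p} : |u-v| \leq h,\ u,v \in [0,t]\}$ denote its modulus of continuity in $\|\cdot\|_{-p}$, so that $\omega(h) \downarrow 0$ as $h \downarrow 0$. Next, writing $\|\cdot\|_{\mathcal{L}}$ for the operator norm on $\mathcal{L}(\Sc_{-p}\cap\E^\prime(\overline{B(0,R)}),\Sc_{-q})$, Proposition \ref{prop-nrm} furnishes a finite total variation $V := C(T)C_2(p)\,t$ bounding $\sum_j \|S_{s_{j+1}}^\ast - S_{s_j}^\ast\|_{\mathcal{L}}$ over any partition $\{s_j\}$ of $[0,t]$. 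Note that each $y_s$, and hence each difference $y_u - y_v$, lies in $\Sc_{-p}\cap\E^\prime(\overline{B(0,R)})$, which is precisely the domain on which this operator norm is defined; in particular each term of $R_\Pi$ lies in $\Sc_{-q}$ by \eqref{S*}, so $R_\Pi \in \Sc_{-q}$.

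The main step is a refinement estimate. Given a partition $\Pi = \{t_i\}$ and a refinement $\Pi^\prime$, on each block $[t_i,t_{i+1}]$ with refining points $t_i = u_0 < \cdots < u_k = t_{i+1}$ I would write $[S_{t_{i+1}}^\ast - S_{t_i}^\ast][y_{t_i}] = \sum_j [S_{u_{j+1}}^\ast - S_{u_j}^\ast][y_{t_i}]$ and compare it with the corresponding part $\sum_j [S_{u_{j+1}}^\ast - S_{u_j}^\ast][y_{u_j}]$ of $R_{\Pi^\prime}$. Their difference is $\sum_j [S_{u_{j+1}}^\ast - S_{u_j}^\ast][y_{t_i} - y_{u_j}]$, whose $\|\cdot\|_{-q}$ norm is at most $\sum_j \|S_{u_{j+1}}^\ast - S_{u_j}^\ast\|_{\mathcal{L}} \, \|y_{t_i}-y_{u_j}\|_{-p}$. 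Since $|t_i - u_j| \leq |\Pi|$, each factor $\|y_{t_i}-y_{u_j}\|_{-p}$ is at most $\omega(|\Pi|)$; summing over all blocks and invoking Proposition \ref{prop-nrm} yields
\[
\|R_\Pi - R_{\Pi^\prime}\|_{-q} \leq \omega(|\Pi|)\, V.
\]

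Finally, for any two partitions $\Pi, \tilde\Pi$ of $[0,t]$ I would pass to their common refinement $\Pi \vee \tilde\Pi$ and apply the triangle inequality to obtain $\|R_\Pi - R_{\tilde\Pi}\|_{-q} \leq (\omega(|\Pi|) + \omega(|\tilde\Pi|))\, V$. This single bound shows both that $\{R_{\Pi_m}\}$ is Cauchy in the complete space $\Sc_{-q}$, hence convergent, and that the limit is independent of the chosen sequence of partitions. The only point requiring care is the refinement estimate, where one must combine the $\Sc_{-p}\to\Sc_{-q}$ operator-norm bound with the finite-variation bound of Proposition \ref{prop-nrm} on matching domains; the remainder is the classical Riemann--Stieltjes argument.
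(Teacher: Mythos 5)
Your proposal is correct and follows essentially the same route as the paper: uniform continuity of $s \mapsto y_s$ in $\|\cdot\|_{-p}$, the finite total variation of $s \mapsto S_s^\ast$ in operator norm from Proposition \ref{prop-nrm}, and a common-refinement comparison to get a Cauchy estimate in the complete space $\Sc_{-q}$. Your version is slightly cleaner in that the single bound $\|R_\Pi - R_{\tilde\Pi}\|_{-q} \leq (\omega(|\Pi|)+\omega(|\tilde\Pi|))\,V$ also makes the independence of the partition sequence explicit, a point the paper dispatches with ``standard arguments.''
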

\begin{proof}
Fix $T > 0$. The map $s \mapsto y_s$ is uniformly continuous on $[0,T]$. Therefore given any $\epsilon > 0$, there exists $\delta > 0$ such that $\|y_u - y_v\|_{-p} < \epsilon$ whenever  $u,v \in [0,T]$ with $|u-v| < \delta$.\\
Choose $m,l$ sufficiently large such that $|\Pi_m| < \delta, |\Pi_l| < \delta$. Let us denote
\[h^m:=\int_0^tdS_s^\ast y^m_s\ \ \text{and}\ \ h^l:=\int_0^tdS^\ast _sy^l_s.\]
Let $\Pi:=\{0=r_0<r_1<r_2\cdots<r_k=t\}$ be the refinement of the two partitions. Note that $|\Pi|< \delta$. In particular $\|y^m_{r_i}-y^l_{r_i}\|_{-p} < \epsilon$. Now, we show that $\|h^m-h^l\|_{-q}\rightarrow0$
as $m,l\rightarrow\infty$.
\begin{align*}
\|h^m-h^l\|_{-q}&=\left\|\int_0^tdS_s^\ast [y^m_s-y^l_s]\right\|_{-q}\\
&=\left\|\sum_{i=0}^{k-1}[S^\ast _{r_{i+1}}-S^\ast _{r_i}][y^m_{r_i}-y^l_{r_i}]\right\|_{-q}\\
&\leq\sum_{i=0}^{k-1}\|S^\ast _{r_{i+1}}-S^\ast _{r_i}\|_{\mathcal{L}(\Sc_{-p}\cap\E^\prime\big(\overline{B(0,R)}\big),\Sc_{-q})}\|y^m_{r_i}-y^l_{r_i}\|_{-p}\\
&\leq\epsilon\sum_{i=0}^{k-1}\|S^\ast _{r_{i+1}}-S^\ast _{r_i}\|_{\mathcal{L}(\Sc_{-p}\cap\E^\prime\big(\overline{B(0,R)}\big),\Sc_{-q})}\\
&\leq \epsilon \ TV_{[0,t]}(S^\ast_\cdot),
\end{align*}
where $TV_{[0,t]}(S^\ast_\cdot)$ denotes the total variation of the map $s \mapsto S^\ast_s$ on $[0,t]$, which is finite from Proposition \ref{prop-nrm}. Since $\epsilon$ was arbitrary, the sequence $\{h^m\}$ is Cauchy and hence $\lim_{m\to \infty}h^m$ exists.\\
By standard arguments, we can show the limit is independent of the sequence of partitions chosen.
\end{proof}

\begin{theorem}\label{stron-mild}
Let $L^\ast $, $A^\ast $ and $S_t^\ast $ satisfy \eqref{L*}, \eqref{A*} and \eqref{S*} respectively.
Let $\{Y_t\}$ be a $(-p,-q)$ strong solution of \eqref{eq5}. Then it is also a $(-p,-q)$ mild solution.

\begin{proof}[First proof of Theorem \ref{stron-mild}]
Since $\{Y_t\}$ is locally of compact support, there exists an increasing
sequence of stopping times $\{\tau_n\}$ such that $\tau_n \uparrow \infty$ and for each $n$, a.s.
$supp(Y^{\tau_n}_t) \subset \overline{B(0,R_n)}, \forall t$ for some $R_n > 0$.

Fix $t \geq 0$ and $x \in \Sc$. Fix a natural number $n$. Consider the function $G:[0,t]\times(\Sc_{-p^\prime}\cap\E^\prime(\overline{B(0,R_n)}))\rightarrow\R$, where $G\in C^{1,2}([0,t]\times(\Sc_{-p^\prime}\cap\E^\prime(\overline{B(0,R_n)})))$ defined by
\[G(s,y):=\mathopen{_{\Sc_{q}}\langle} x, S_{t-s}^\ast y\rangle_{\Sc_{-q}}.\]
Recall that by Theorem \ref{thm4}, $\sup_{s \in [0,t]}\|S_s^\ast \|_H<C(t)$
where $\|\cdot\|_H$ is the operator norm on the Banach space $H$ of bounded linear operators from $\Sc_{-p^\prime}\cap\E^\prime(\overline{B(0,R_n)})$ to $\Sc_{-q}$ (here $p^\prime,q$ are as in \eqref{S*}). We will consider the function $G(s,Y_s)$ and localization (on $Y$) is necessary to make use of boundedness of the operators $S_t^\ast$.

For $u \in [0,t], y\in \Sc_{-p}\cap\E^\prime(\overline{B(0,R_n)})$, using Lemma \ref{lema1} we have
\[ \langle x, -\int_0^u S_{t-s}^\ast L^\ast y \,ds\rangle  =  \langle  x,-\int^t_{t-u}S_r^\ast L^\ast y\, dr\rangle = \langle x, S_{t-u}^\ast y - S_t^\ast y\rangle = G(u,y)-G(0,y).\]
Then the partial derivative $G_s(s,y)=-\mathopen{_{\Sc_{q}}\langle} x,  S_{t-s}^\ast L^\ast y\rangle_{\Sc_{-q}}, y \in \Sc_{-p}\cap\E^\prime(\overline{B(0,R_n)})$.

Now for $y, z \in \Sc_{-p^\prime}\cap\E^\prime(\overline{B(0,R_n)})$, we have
\[G(s,y+z) - G(s,y) = \langle x, S_{t-s}^\ast z\rangle.\]
Since $z \mapsto \langle x, S_{t-s}^\ast z\rangle$ is a bounded linear functional on $\Sc_{-p^\prime}$, we get the Fr\'echet derivative $G_y(s,y)=\langle x, S_{t-s}^\ast \cdot\rangle:\Sc_{-p^\prime}\to\R$. Consequently, $G_{yy}(s,y)=0$.

Since
\[Y_s^{\tau_n}=\psi+\int_0^{s\wedge\tau_n} A^\ast Y_r^{\tau_n}\cdot dB_r+\int_0^{s\wedge\tau_n} L^\ast Y_r^{\tau_n}dr,\forall s \in [0,t]\]
applying It\^o's formula (see \cite{MR1207136, MR2560625}) we get
a.s., for all $t \geq 0$,
\begin{align*}
\langle x, Y_t^{\tau_n}\rangle - \langle x, S_t^\ast\psi\rangle &= G(t,Y_t^{\tau_n})-G(0,Y^{\tau_n}_0)\\
&= \int_0^{t\wedge\tau_n}  G_s(s,Y_s^{\tau_n})ds+\int_0^{t\wedge\tau_n} G_y(s,Y_s^{\tau_n})L^\ast Y_s^{\tau_n} ds\\
&+\int_0^{t\wedge\tau_n} G_y(s,Y_s^{\tau_n})A^\ast Y_s^{\tau_n}\cdot dB_s\\
&= -\int_0^{t\wedge\tau_n} \langle x,  S_{t-s}^\ast L^\ast Y_s^{\tau_n} \rangle ds+\int_0^{t\wedge\tau_n} \langle x,  S_{t-s}^\ast L^\ast Y_s^{\tau_n} \rangle ds\\
&+\int_0^{t\wedge\tau_n} \langle x,  S_{t-s}^\ast A^\ast Y_s^{\tau_n} \rangle \cdot dB_s\\
&= \langle x,\int_0^{t\wedge\tau_n}   S_{t-s}^\ast A^\ast Y_s^{\tau_n} \cdot dB_s  \rangle.
\end{align*}
Since $x \in \Sc$ was arbitrary, we have a.s., for all $t \geq 0$,
\[Y_t^{\tau_n} -  S_t^\ast\psi = \int_0^{t\wedge\tau_n}   S_{t-s}^\ast A^\ast Y_s^{\tau_n} \cdot dB_s.\]
Letting $n$ go to $\infty$, we get the required relation.
\end{proof}

\begin{proof}[Second proof of Theorem \ref{stron-mild}]
We first claim that \eqref{justifi} holds.
\begin{equation}\label{justifi}
\int_0^tS^\ast _{t-s}dY_s=\int_0^tS^\ast _{t-s}L^\ast Y_sds+\int_0^tS^\ast _{t-s}A^\ast Y_s\cdot dB_s.
\end{equation}
Here
$S_t^\ast :\Sc_{-p^\prime}\cap\E^\prime\big(\overline{B(0,R)}\big)\rightarrow \Sc_{-q}$ is a bounded linear operator for every $R > 0$ and
$\{Y_t\}_{t \geq 0}$ is a continuous semimartingale, which is locally of compact support. The integral on the left hand side of \eqref{intbp1},
i.e. $\int_0^t S_{t-s}^\ast dY_s$ is well defined (see \cite[Chapter 4]{MR688144}).

Since $\{Y_t\}$ is locally of compact support, there exists an increasing
sequence of stopping times $\{\tau_n\}$ such that $\tau_n \uparrow \infty$ and for each $n$, a.s.
$supp(Y^{\tau_n}_t) \subset \overline{B(0,R_n)}, \forall t$ for some $R_n > 0$. Since $\{Y_t\}$ has continuous paths in $\Sc_{-p}$, without loss of generality, we assume $\|Y_t^{\tau_n}\|_{-p}\leq n$.
Let us also consider the partition, $\T:=\{0=s_0<s_1<\cdots<s_n=t\}$,  where $|\T|:=\max_i|s_{i+1}-s_i|$. Now, we can show
\[\sum_{\{s_i\in\T\}}\left\|\int_{s_i}^{s_{i+1}}S^\ast _{t-s_i}L^\ast Y_r^{\tau_k}dr-S^\ast _{t-s_i}L^\ast Y_{s_i}^{\tau_k}(s_{i+1}-s_i)\right\|_{-q} \to 0\]
as $|\T|\rightarrow 0$. Similarly for the stochastic integral,
\[\sum_{\{s_i\in\T\}}\Exp\left\|\int_{s_i}^{s_{i+1}}S^\ast _{t-s_i}A^\ast Y^{\tau_k}_r\cdot dB_r-S^\ast _{t-s_i}A^\ast Y^{\tau_k}_{s_i}\cdot(B_{s_{i+1}}-B_{s_i})\right\|^2_{-q} \to 0\]
as $|\T|\rightarrow 0$. Hence,
\begin{align*}
 &\int_0^{t\wedge\tau_k}S^\ast _{t-s}dY^{\tau_k}_s&\\
 &=\lim_{|\T|\rightarrow0}\sum_{\{s_i\in\T\}}S^\ast _{t-s_i}(Y^{\tau_k}_{s_{i+1}}-Y^{\tau_k}_{s_i})&\\
 &=\lim_{|\T|\rightarrow0}\sum_{\{s_i\in\T\}}\int_{s_i}^{s_{i+1}}S^\ast _{t-s_i}L^\ast Y^{\tau_k}_rdr
              +\lim_{|\T|\rightarrow0}\sum_{\{s_i\in\T\}}\int_{s_i}^{s_{i+1}}S^\ast _{t-s_i}A^\ast Y^{\tau_k}_r\cdot dB_r&\\
&=\lim_{|\T|\rightarrow0}\sum_{\{s_i\in\T\}}S^\ast _{t-s_i}L^\ast Y^{\tau_k}_{s_i}(s_{i+1}-s_i)&\\
&\ \ \ +\lim_{|\T|\rightarrow0}\sum_{\{s_i\in\T\}}S^\ast _{t-s_i}A^\ast Y^{\tau_k}_{s_i}\cdot(B_{s_{i+1}}-B_{s_i}).&\\
&=\int_0^{t\wedge\tau_k}S^\ast _{t-s}L^\ast Y^{\tau_k}_sds+\int_0^{t\wedge\tau_k}S^\ast _{t-s}A^\ast Y^{\tau_k}_s\cdot dB_s.&
\end{align*}
where in the last but one equality, the second limit in the right
hand side is taken in $L^2(\Omega)$. Letting $\tau_k \to \infty$, we
get \eqref{justifi}. Then,
\begin{align}\label{intbp1}
\int_0^tS^\ast _{t-s}A^\ast Y_s\cdot dB_s&=\int_0^tS^\ast _{t-s}dY_s-\int_0^tS^\ast _{t-s}L^\ast Y_sds&\nonumber\\
&=\int_0^tS^\ast _{t-s}dY_s+\int_0^tdS^\ast _{t-s}Y_s.&
\end{align}
Here, we have used the fact that, if $x\in \Sc_{-p}\cap\E^\prime\big(\overline{B(0,R)}\big)$ then $\frac{d}{dt}S_t^\ast x=S_t^\ast L^\ast x$. Note that the second integral on the right hand side of \eqref{intbp1},
$\int_0^tdS^\ast _{t-s}Y_s$ was defined in \ref{intg-dfn}.
\medskip\\
At the end, we show that the cross variation $[S^\ast ,Y]_t =0$ for
$t\geq 0$, where
\[[S^\ast ,Y]_t:=\lim_{\max|t_{i+1}-t_i|\rightarrow0}\sum_{i=0}^{n-1}(S^\ast _{t_{i+1}}-S_{t_i}^\ast )(Y_{t_{i+1}}-Y_{t_i}).\]
This follows from the fact that $\|[S^\ast ,Y^{\tau_n}]_t\|_{-q} =
0$, which can be verified as in Proposition \ref{prop-int}. Now,
from \eqref{intbp1}, we write the following integration by parts
formula
\[\int_0^tS_{t-s}^\ast A^\ast Y_s\cdot dB_{s} =\int_0^tS^\ast _{t-s}dY_s+\int_0^tdS^\ast _{t-s}Y_s =Y_t-S_t^\ast Y_0,
\]
which implies $Y_t=S^\ast _t\psi+\int_0^tS_{t-s}^\ast A^\ast Y_s\cdot dB_{s}$. This completes the proof.
\end{proof}
\end{theorem}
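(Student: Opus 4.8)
The plan is to fix $x \in \Sc$ and $t \geq 0$, test the desired identity against $x$, and thereby reduce the infinite-dimensional statement to a scalar It\^o computation that mirrors the classical martingale representation argument. Since the operator-norm bounds on $S^\ast_{t-s}$ are only available on distributions supported in a fixed ball (Theorem \ref{thm4}), I would first exploit the locally-of-compact-support hypothesis: choose stopping times $\tau_n \uparrow \infty$ with $supp(Y^{\tau_n}_s) \subset \overline{B(0,R_n)}$ for all $s$, a.s., and work with the stopped process $Y^{\tau_n}$, which stays in $\Sc_{-p} \cap \E^\prime(\overline{B(0,R_n)})$ where $L^\ast$, $A^\ast$, $S^\ast$ are all bounded between the relevant spaces by \eqref{L*}, \eqref{A*}, \eqref{S*}. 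On this fixed ball I would introduce the scalar function $G(s,y) := \langle x, S^\ast_{t-s} y\rangle$ for $s \in [0,t]$.

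The crux is the differentiability of $G$. Because $y \mapsto G(s,y)$ is linear and $z \mapsto \langle x, S^\ast_{t-s} z\rangle$ is a bounded linear functional on $\Sc_{-p^\prime}\cap\E^\prime(\overline{B(0,R_n)})$, the Fr\'echet derivative is $G_y(s,y) = \langle x, S^\ast_{t-s}\,\cdot\,\rangle$ and the second derivative $G_{yy}$ vanishes identically. For the time derivative I would invoke Lemma \ref{lema1}: writing $G(u,y) - G(0,y) = \langle x, S^\ast_{t-u}y - S^\ast_t y\rangle = -\langle x, \int_{t-u}^t S^\ast_r L^\ast y\, dr\rangle$ and differentiating in $u$ gives $G_s(s,y) = -\langle x, S^\ast_{t-s} L^\ast y\rangle$. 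This is where the semigroup--generator relation does all the work, and it is the step I expect to be the main obstacle: it is the only place the precise structure of $S^\ast$ enters, and one must verify that $G \in C^{1,2}$ with these derivatives on the fixed ball so that the infinite-dimensional It\^o formula applies.

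With $G$ in hand, I would apply It\^o's formula to $s \mapsto G(s, Y^{\tau_n}_s)$ along the strong-solution dynamics $dY^{\tau_n}_s = L^\ast Y^{\tau_n}_s\,ds + A^\ast Y^{\tau_n}_s\cdot dB_s$. Since $G_{yy}=0$ there is no quadratic-variation contribution, and the two drift terms cancel exactly: the $G_s$ term contributes $-\langle x, S^\ast_{t-s}L^\ast Y^{\tau_n}_s\rangle\,ds$ while $G_y\,(L^\ast Y^{\tau_n}_s)\,ds$ contributes $+\langle x, S^\ast_{t-s}L^\ast Y^{\tau_n}_s\rangle\,ds$. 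What survives is only the martingale part,
\[ G(t, Y^{\tau_n}_t) - G(0, Y^{\tau_n}_0) = \int_0^{t\wedge\tau_n} \langle x, S^\ast_{t-s} A^\ast Y^{\tau_n}_s\rangle\cdot dB_s = \Big\langle x, \int_0^{t\wedge\tau_n} S^\ast_{t-s} A^\ast Y^{\tau_n}_s\cdot dB_s\Big\rangle, \]
where the last equality commutes the bounded functional $x$ past the stochastic integral. Since $G(t,y) = \langle x, S^\ast_0 y\rangle = \langle x, y\rangle$ and $G(0,\psi) = \langle x, S^\ast_t \psi\rangle$, the left side equals $\langle x, Y^{\tau_n}_t - S^\ast_t \psi\rangle$.

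Finally I would remove the test function and the localization. As $x \in \Sc$ is arbitrary and $\Sc$ is dense in $\Sc_q$, the scalar identity promotes to the $\Sc_{-q}$-valued equality $Y^{\tau_n}_t - S^\ast_t \psi = \int_0^{t\wedge\tau_n} S^\ast_{t-s} A^\ast Y^{\tau_n}_s\cdot dB_s$; letting $n \to \infty$ (so $\tau_n \uparrow \infty$) yields \eqref{eq7}. It remains only to confirm that $\{Y_t\}$ already has the regularity required of a mild solution in Definition \ref{defn1} --- adaptedness, continuous $\Sc_{-p}$-paths, and being locally of compact support --- all of which are inherited verbatim from its being a strong solution. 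An alternative to the It\^o route would be an integration-by-parts argument for the product $S^\ast_{t-s} Y_s$, using Proposition \ref{prop-nrm} to make sense of $\int_0^t dS^\ast_{t-s} Y_s$ and to show the cross-variation $[S^\ast, Y]$ vanishes because $s \mapsto S^\ast_s$ has finite variation; the drift cancellation would then be exactly as above.
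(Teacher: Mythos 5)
Your proposal is correct and follows the paper's first proof essentially verbatim: the test function $x \in \Sc$, the function $G(s,y) = \langle x, S^\ast_{t-s}y\rangle$ with derivatives computed via Lemma \ref{lema1} and the vanishing of $G_{yy}$, the It\^o-formula drift cancellation on the stopped process, and the removal of the test function and localization. Even your closing remark about the integration-by-parts alternative via Proposition \ref{prop-nrm} and the vanishing cross-variation matches the paper's second proof, so there is nothing to add.
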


\begin{proposition}\label{mld-mar}[From mild solutions to martingale representations] Fix $x \in \R^d$ and consider the initial condition
$\psi = \delta_x$ in \eqref{eq5}. Then the mild solution
representation \eqref{eq7} is equivalent to the martingale representation
of square integrable functionals of the diffusion $\{X(t,x)\}$.

\begin{proof}
If the martingale representation holds, we have in
particular, for every $f \in \Sc$, the explicit representation (see, for example \cite{MR2590255}),
\begin{equation}\label{1step-mtgle-repn}
f(X(t,x)) = \Exp f(X(t,x)) + \sum_{i=1}^r  \int_0^t A_i S_{t-s}
f(X(s,x)) dB_s^i.
\end{equation}
To see this, consider the function $g:[0,t]\times\R^d\to\R$ given by $g(s,x):=S_{t-s}f(x) = \Exp f(X(t-s,x))$. Then $g \in C^{1,2}([0,t]\times\R^d)$ and It\^{o} formula gives
\begin{align*}
f(X(t,x)) &= g(t,X(t,x))\\
&= S_t f(x) + \sum_{i=1}^r \sum_{j=1}^d \int_0^t \partial_j S_{t-s}f(X(s,x))\sigma_{ji}(X(s,x))~ dB^i_s,
\end{align*}
since $\partial_t g(s,x) + Lg(s,x) = 0,\forall s \in [0,t], x \in
\R^d$. But, from the definition of the operators $A_i$ in Section 2,
\[\sum_{j=1}^d
\partial_j S_{t-s}f(X(s,x))\sigma_{ji}(X(s,x)) = A_i S_{t-s}
f(X(s,x)),\] which implies \eqref{1step-mtgle-repn}.

Since $Z_t(\psi) = \delta_{X(t,x)}$ (see Example \ref{spl-example}) we have by duality

\begin{equation}\label{mild-mtgle}
\langle f,\delta_{X(t,x)}\rangle = \Exp \langle f,
\delta_{X(t,x)}\rangle + \sum_{i=1}^r \int_0^t \langle
f,S_{t-s}^\ast A_i^\ast \delta_{X(s,x)}\rangle dB_s^i.
\end{equation}
Thus \eqref{eq7} holds. Conversely, if \eqref{eq7} holds then the strong
solution $\{Z_t(\psi)\}$, given by \eqref{eq4} is also a mild
solution and $Z_t(\psi) = \delta_{X(t,x)}$ (see Example
\ref{spl-example}). Hence
\begin{align*}
\delta_{X(t,x)} &= S_t^\ast \delta_{x} + \int_0^t S_{t-s}^\ast A^\ast \delta_{X(s,x)}\cdot dB_s\\
&= \Exp \delta_{X(t,x)} + \sum_{i=1}^r \int_0^t S_{t-s}^\ast
A_i^\ast \delta_{X(s,x)} dB_s^i. \end{align*}
 Now for any $f \in \Sc$, we
get from the identity $f(X(t,x)) = \langle f,\delta_{X(t,x)}\rangle$ that \eqref{mild-mtgle} holds.

Using the above representation and Markov property, one can get
martingale representations for functionals of the form
$f_1(X(t_1,\cdot))f_2(X(t_2,\cdot))\cdots f_k(X(t_k,\cdot))$, where
$f_i \in \Sc$, as in \cite{MR2569262,MR2590255}. Using density arguments, one
can get representations for all square integrable functionals of the diffusion $\{X(t,x)\}$.
\end{proof}
\end{proposition}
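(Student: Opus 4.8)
The plan is to establish the equivalence by exhibiting both representations as a single identity pushed through the duality between $\Sc$ and $\Sc^\prime$, the bridge being the elementary fact $f(X(t,x)) = \langle f, \delta_{X(t,x)}\rangle$ together with $Z_t(\delta_x) = \delta_{X(t,x)}$ from Example \ref{spl-example}. Since the statement is an equivalence, I would organize the argument as two passages in opposite directions along this duality, both resting on the same computation.

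First I would derive the explicit one-step martingale representation \eqref{1step-mtgle-repn}. Fixing $t$ and $f \in \Sc$, set $g(s,y) := S_{t-s}f(y) = \Exp f(X(t-s,y))$. Since the coefficients are smooth with bounded derivatives, $g \in C^{1,2}([0,t]\times\R^d)$ and satisfies the backward equation $\partial_s g + Lg = 0$. Applying the finite-dimensional It\^o formula to $s \mapsto g(s,X(s,x))$, the $ds$-terms cancel by this equation, leaving only the martingale part $\sum_{i,j}\int_0^t \partial_j S_{t-s}f(X(s,x))\sigma_{ji}(X(s,x))\,dB_s^i$; recognizing the inner sum over $j$ as $A_i S_{t-s}f(X(s,x))$ from the definition of $A_i$ yields \eqref{1step-mtgle-repn}. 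Next I would dualize. Using $\langle f, \delta_{X(t,x)}\rangle = f(X(t,x))$, the identity $\langle f, S_t^\ast \delta_x\rangle = \Exp f(X(t,x))$ (from $S_t^\ast\psi = \Exp Z_t(\psi)$ and Example \ref{spl-example}), and the chain of adjoint relations $A_i S_{t-s}f(X(s,x)) = \langle A_i S_{t-s}f, \delta_{X(s,x)}\rangle = \langle S_{t-s}f, A_i^\ast \delta_{X(s,x)}\rangle = \langle f, S_{t-s}^\ast A_i^\ast \delta_{X(s,x)}\rangle$, I rewrite \eqref{1step-mtgle-repn} as \eqref{mild-mtgle}. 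Interchanging the continuous linear functional $\langle f,\cdot\rangle$ with the $\Sc_{-q}$-valued stochastic integral then identifies \eqref{mild-mtgle} with \eqref{eq7} tested against an arbitrary $f \in \Sc$, and density of $\Sc$ gives \eqref{eq7}. The converse is the same computation read backwards: starting from \eqref{eq7} with $Y_t = Z_t(\delta_x) = \delta_{X(t,x)}$, pairing with $f$ recovers \eqref{mild-mtgle}, which under $f(X(t,x)) = \langle f, \delta_{X(t,x)}\rangle$ is exactly \eqref{1step-mtgle-repn}.

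For the passage from single-time functionals to general square integrable functionals, I would iterate using the Markov property of $\{X(t,x)\}$: applying the one-step representation successively to $f_k(X(t_k,\cdot))$, conditioning on $\F_{t_{k-1}}$ and reusing the semigroup, produces representations for products $f_1(X(t_1,\cdot))\cdots f_k(X(t_k,\cdot))$ as in \cite{MR2569262, MR2590255}, and an $L^2$ density argument extends this to all square integrable functionals.

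The main obstacle I anticipate is the careful justification of the adjoint manipulations and the interchange of the pairing $\langle f,\cdot\rangle$ with the stochastic integral. One must verify that $S_{t-s}^\ast A_i^\ast \delta_{X(s,x)}$ is a genuine element of the appropriate Hermite--Sobolev space, which is guaranteed by Proposition \ref{prop1} and Theorem \ref{thm4} since $\delta_{X(s,x)}$ has compact support locally in $s$, and that the resulting integrand is adapted and square integrable so that the stochastic integral and the duality pairing commute. It is precisely the localization underlying the notion ``locally of compact support'' that makes the operator bounds of Theorem \ref{thm4} available here, so that all the $\Sc_{-q}$-valued objects entering \eqref{mild-mtgle} are well defined.
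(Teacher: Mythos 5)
Your proposal is correct and follows essentially the same route as the paper's own proof: the It\^o computation on $g(s,y)=S_{t-s}f(y)$ with the backward equation to obtain \eqref{1step-mtgle-repn}, the passage to \eqref{mild-mtgle} via the adjoint relations and $Z_t(\delta_x)=\delta_{X(t,x)}$, the converse by pairing \eqref{eq7} with $f\in\Sc$, and the extension to general square integrable functionals by the Markov property and density. Your added remarks justifying the interchange of the pairing with the stochastic integral and the role of local compact support in invoking Proposition \ref{prop1} and Theorem \ref{thm4} only make explicit what the paper leaves implicit.
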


\begin{theorem}\label{mild-strong}
Let $\{Y_t\}$ be a $(-p,-q)$ mild solution of \eqref{eq5}.
Then there exists $q^\prime > q$ such that the mild solution is a $(-p,-q^\prime)$ strong solution.
\begin{proof}
From \eqref{eq7}, we have the equality in $\Sc_{-q}$
\[Y_s=S_s^\ast \psi+\int_0^s S^\ast _{s-r}A^\ast Y_r\cdot dB_r.\]
Note that, from Proposition \ref{prop1} we have the boundedness of the linear operator $L^\ast :\Sc_{-q}\cap\E^\prime(K)\rightarrow \Sc_{-q^\prime}\cap\E^\prime(K)$, for any $q^\prime > [q]+4$, where $K$ is some compact set in $\R^d$. In fact the same argument gives the boundedness of $L^\ast :\Sc_{-q}\rightarrow \Sc_{-q^\prime}$, for any $q^\prime > [q]+4$.\\
Hence, operating on both sides of the above equation by the linear
operator $L^\ast $, and integrating from $0$ to $t$ we obtain an
equality in $\Sc_{-q^\prime}$
\[\int_0^tL^\ast Y_sds=\int_0^tL^\ast S_s^\ast \psi ds+\int_0^t\int_0^sL^\ast S^\ast _{s-r}A^\ast Y_r\cdot dB_rds.\]
Now, by applying stochastic Fubini and integration by parts formulas respectively on the R.H.S. of the above equation we get
\begin{align*}
\int_0^t L^\ast Y_s ds&=\int_0^t L^\ast S_s^\ast \psi ds+\int_0^t\int_r^tL^\ast S^\ast _{s-r}A^\ast Y_rds \cdot dB_r\\
&=S^\ast _t\psi-\psi+\int_0^tS^\ast _{t-r}A^\ast Y_r\cdot dB_r-\int_0^tA^\ast Y_r\cdot dB_r\\
&=\left(S^\ast _t\psi+\int_0^tS^\ast _{t-r}A^\ast Y_r\cdot dB_r\right)-\psi-\int_0^tA^\ast Y_r\cdot dB_r\\
&=Y_t-\psi-\int_0^tA^\ast Y_r\cdot dB_r.
\end{align*}
Hence $Y_t=\psi+\int_0^tA^\ast Y_s\cdot dB_s+\int_0^tL^\ast Y_s ds$ in $\Sc_{-q^\prime}$. This completes the proof.
\end{proof}
\end{theorem}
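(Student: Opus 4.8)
The plan is to invert the passage from strong to mild solutions carried out in Theorem \ref{stron-mild}: starting from the mild identity \eqref{eq7}, I would apply the operator $L^\ast$ and integrate in time so as to reconstruct the drift term $\int_0^t L^\ast Y_s\,ds$ and thereby recover the strong form \eqref{eq6}. First I would fix $q^\prime > [q]+4$ so that, by Proposition \ref{prop1}, $L^\ast : \Sc_{-q} \to \Sc_{-q^\prime}$ is a bounded linear operator; this is the space $\Sc_{-q^\prime}$ in which the strong equation will ultimately hold. Applying $L^\ast$ to both sides of $Y_s = S_s^\ast \psi + \int_0^s S_{s-r}^\ast A^\ast Y_r \cdot dB_r$ and integrating $s$ over $[0,t]$ gives
\[\int_0^t L^\ast Y_s\,ds = \int_0^t L^\ast S_s^\ast \psi\,ds + \int_0^t\!\int_0^s L^\ast S_{s-r}^\ast A^\ast Y_r \cdot dB_r\,ds .\]

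Next I would apply the stochastic Fubini theorem to the iterated integral, rewriting $\int_0^t\!\int_0^s(\cdots)\,dB_r\,ds$ as $\int_0^t\!\int_r^t(\cdots)\,ds \cdot dB_r$, and then evaluate each inner time integral using Lemma \ref{lema1}. Since $L^\ast$ generates $(S_u^\ast)$, it commutes with the semigroup on the relevant domain, so $L^\ast S_u^\ast = S_u^\ast L^\ast = \tfrac{d}{du}S_u^\ast$; integrating this relation yields $\int_0^t L^\ast S_s^\ast \psi\,ds = S_t^\ast \psi - \psi$ for the deterministic term, and, after the substitution $u = s-r$, $\int_r^t L^\ast S_{s-r}^\ast A^\ast Y_r\,ds = S_{t-r}^\ast A^\ast Y_r - A^\ast Y_r$ for the stochastic one. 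Substituting these back and regrouping, the right-hand side becomes $\big(S_t^\ast \psi + \int_0^t S_{t-r}^\ast A^\ast Y_r \cdot dB_r\big) - \psi - \int_0^t A^\ast Y_r \cdot dB_r$; recognising the bracketed quantity as $Y_t$ via the mild identity \eqref{eq7} itself, I obtain $\int_0^t L^\ast Y_s\,ds = Y_t - \psi - \int_0^t A^\ast Y_s \cdot dB_s$, which is exactly \eqref{eq6} in $\Sc_{-q^\prime}$.

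The hard part will be the rigorous justification of the stochastic Fubini interchange together with the commutation $L^\ast S_u^\ast = S_u^\ast L^\ast$ used inside the time integrals. For the Fubini step one must verify the requisite integrability, which I expect to follow from the operator-norm bounds of Theorem \ref{thm4} and Proposition \ref{prop1} combined with the fact that $\{Y_t\}$ is locally of compact support, so that $S_{s-r}^\ast$ and $L^\ast$ act as bounded operators on a fixed compact-support class; concretely one localises by the stopping times $\{\tau_n\}$ witnessing the local compact support, establishes the identity on each $[0,\tau_n]$, and then lets $\tau_n \uparrow \infty$. The commutation relation I would justify from the generator structure of $(S_u^\ast)$ on the appropriate domain. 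Finally, the remaining requirements of Definition \ref{strong} --- that $\{Y_t\}$ be $\Sc_{-p}\cap\E^\prime$-valued, adapted, continuous in $\|\cdot\|_{-p}$, and locally of compact support --- are all inherited verbatim from the hypothesis that $\{Y_t\}$ is a $(-p,-q)$ mild solution, so no further work is needed beyond the integral identity just derived.
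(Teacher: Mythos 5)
Your proposal is correct and follows essentially the same route as the paper's own proof: fix $q^\prime > [q]+4$ via Proposition \ref{prop1}, apply $L^\ast$ to the mild identity and integrate in time, interchange via stochastic Fubini, evaluate the inner integrals by the semigroup relations of Lemma \ref{lema1}, and recognise $Y_t$ from \eqref{eq7} to recover \eqref{eq6} in $\Sc_{-q^\prime}$. Your additional remarks on localisation by the stopping times witnessing local compact support and on justifying the commutation $L^\ast S_u^\ast = S_u^\ast L^\ast$ only make explicit steps the paper leaves implicit.
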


\section{Uniqueness}\label{s:4}
We now consider the uniqueness of strong and mild solutions of \eqref{eq5}. The uniqueness condition, viz. the Monotonicity inequality, involves both the domain and the range of the operators $L^\ast, A^\ast_i$. For a compact subset $K$ of $\R^d$, it was shown in \cite{MR2373102} that $A^\ast :\Sc_{p}\cap\E^\prime(K)\rightarrow\mathcal{L}(\R^r,\Sc_{q}\cap\E^\prime(K))$ and $L^\ast :\Sc_{p}\cap\E^\prime(K)\rightarrow \Sc_{q}\cap\E^\prime(K)$ are bounded linear
operators, first when $p,q$ are both positive satisfing $p > [q]+4$ and then by duality when $p,q$ are negative satisfying $-q > [-p]+4$.

\begin{definition}[Monotonicity inequality, {\cite[equation
(4.2)]{MR2373102}}]\label{Mon-ineq} Fix $p,q$ both positive or both
negative, such that $A^\ast
:\Sc_{p}\cap\E^\prime\rightarrow\mathcal{L}(\R^r,\Sc_{q}\cap\E^\prime)$
and $L^\ast :\Sc_{p}\cap\E^\prime\rightarrow \Sc_{q}\cap\E^\prime$
are linear operators. Say that the pair of operators
$(L^\ast,A^\ast)$ satisfies the $(p,q)$ Monotonicity inequality if
\begin{equation}\label{mono-ineq}
2\langle\phi,L^\ast\phi\rangle_{q} + \sum\limits_{i=1}^r \| A_i^\ast\phi \|^2_{q}  \leq \; C_K \|\phi\|_{q}^2,
\; \forall \phi \in \Sc_{p}\cap\E^\prime(K),
\end{equation}
for all compact subsets $K$ of $\R^d$. Here $C_K$ is some positive constant depending on the set $K$.
\end{definition}

\begin{theorem}[{\cite[Theorem 4.4]{MR2373102}}]
Let $p \geq 0$ and $q > [p]+4$. If the $(-p,-q)$ Monotonicity inequality holds, then we have the uniqueness of $(-p,-q)$ strong solutions.
\end{theorem}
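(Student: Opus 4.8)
The plan is to run the standard energy estimate attached to the Monotonicity inequality, localized by the stopping times furnished by the \emph{locally of compact support} property. Suppose $\{Y_t\}$ and $\{\tilde Y_t\}$ are two $(-p,-q)$ strong solutions of \eqref{eq5} with the same initial condition $\psi$, and set $U_t:=Y_t-\tilde Y_t$. By the linearity of \eqref{eq6}, $\{U_t\}$ is an $\Sc_{-p}\cap\E^\prime$-valued adapted process with continuous paths in $\Sc_{-p}$, locally of compact support, with $U_0=0$, and satisfying in $\Sc_{-q}$ the equation
\[U_t=\int_0^t A^\ast U_s\cdot dB_s+\int_0^t L^\ast U_s\,ds.\]
The goal is to prove that $U_t=0$ a.s.\ for all $t$.

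First I would produce a single localizing sequence for $U$. Since $Y$ and $\tilde Y$ are each locally of compact support, intersecting their two sequences of stopping times and taking unions of the two families of compact sets yields stopping times $\tau_n\uparrow\infty$ and compact sets $K_n$ with $supp(U^{\tau_n}_t)\subseteq K_n$ a.s.\ for all $t$; by continuity of paths in $\Sc_{-p}$ I may additionally assume $\|U^{\tau_n}_t\|_{-p}\le n$. Note that $U_s\in\Sc_{-p}\subseteq\Sc_{-q}$ since $-q<-p$ and the norms are increasing.

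The heart of the argument is It\^o's formula applied to the real-valued process $\|U_t\|_{-q}^2$, viewing $\{U_t\}$ as a continuous semimartingale in the Hilbert space $\Sc_{-q}$ with drift $L^\ast U_s$ and diffusion coefficients $A_i^\ast U_s$ (both $\Sc_{-q}$-valued by \eqref{L*}, \eqref{A*} and the mapping properties recalled before Definition \ref{Mon-ineq}). After stopping at $\tau_n$ this gives
\begin{align*}
\|U_{t\wedge\tau_n}\|_{-q}^2 &= \int_0^{t\wedge\tau_n}\Big(2\langle U_s,L^\ast U_s\rangle_{-q}+\sum_{i=1}^r\|A_i^\ast U_s\|_{-q}^2\Big)\,ds \\
&\quad + 2\sum_{i=1}^r\int_0^{t\wedge\tau_n}\langle U_s,A_i^\ast U_s\rangle_{-q}\,dB_s^i.
\end{align*}
On the set $\{s\le\tau_n\}$ we have $U_s\in\Sc_{-p}\cap\E^\prime(K_n)$, so the $(-p,-q)$ Monotonicity inequality \eqref{mono-ineq} bounds the drift integrand by $C_{K_n}\|U_s\|_{-q}^2$. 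The localization $\|U^{\tau_n}_s\|_{-p}\le n$ together with the bound $\|A_i^\ast U_s\|_{-q}\le C_1(p)\|U_s\|_{-p}$ from Proposition \ref{prop1} keeps the integrand of the stochastic term uniformly bounded on $[0,t\wedge\tau_n]$, making it a genuine martingale, so its expectation vanishes. Writing $f_n(t):=\Exp\|U_{t\wedge\tau_n}\|_{-q}^2$ (a bounded function of $t$), I obtain $f_n(t)\le C_{K_n}\int_0^t f_n(s)\,ds$.

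Finally, Gronwall's inequality forces $f_n\equiv 0$, so $U_{t\wedge\tau_n}=0$ a.s.\ for each fixed $t$; continuity of paths upgrades this to $U_{t\wedge\tau_n}=0$ for all $t$ simultaneously, a.s., and letting $n\to\infty$ gives $U_t=0$ for all $t$, a.s. I expect the main obstacle to be the rigorous justification of the Hilbert-space It\^o formula for $\|U_t\|_{-q}^2$: one must confirm that \eqref{eq6} genuinely holds as an identity of $\Sc_{-q}$-valued semimartingales and that all integrands are suitably integrable on each stopped interval, both of which rest on the localization and on the uniform operator bounds of Proposition \ref{prop1}. Once the It\^o formula is in place, the interplay of the Monotonicity inequality and Gronwall is routine.
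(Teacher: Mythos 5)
Your proposal is correct and follows essentially the same route as the paper: the paper itself offers no proof of this theorem, quoting it directly from \cite{MR2373102} (Theorem 4.4), and the argument there is exactly the one you give --- form the difference of two solutions, apply the Hilbert-space It\^o formula to its squared $\Sc_{-q}$-norm, localize via stopping times so that the Monotonicity inequality \eqref{mono-ineq} applies on a fixed compact set and the stochastic integral is a true martingale, and conclude by Gronwall's lemma. Your handling of the localization (combining the two compact-support sequences and the norm bound, using Proposition \ref{prop1} to control the integrands) is the right way to adapt that argument to the present paper's definition of strong solution.
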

As a consequence of Theorem \ref{mild-strong}, we have the next result.

\begin{corollary}
Let $p, p^\prime, q$ be as in \eqref{L*}, \eqref{A*}, \eqref{S*}. Let $q^\prime$ be as in Theorem \ref{mild-strong}. If the $(-p,-q^\prime)$ Monotonicity inequality holds, then we have the uniqueness of $(-p,-q)$ mild solutions.
\end{corollary}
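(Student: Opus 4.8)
The plan is to reduce uniqueness of mild solutions to the already-established uniqueness of strong solutions by passing through Theorem \ref{mild-strong}. Suppose $\{Y_t\}$ and $\{\tilde Y_t\}$ are two $(-p,-q)$ mild solutions of \eqref{eq5} with the same initial condition $\psi \in \Sc_{-p}\cap\E^\prime$. First I would apply Theorem \ref{mild-strong} to each of them: this produces the $q^\prime > q$ named in the statement such that both $\{Y_t\}$ and $\{\tilde Y_t\}$ are $(-p,-q^\prime)$ strong solutions of \eqref{eq5}, again with initial condition $\psi$. Both are, by construction, $\Sc_{-p}\cap\E^\prime$-valued adapted processes with continuous paths in $\Sc_{-p}$ and locally of compact support, so they are bona fide strong solutions in the sense of Definition \ref{strong}.

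Next I would check that the strong-solution uniqueness theorem (\cite[Theorem 4.4]{MR2373102}) applies to the index pair $(-p,-q^\prime)$. That theorem requires $p \geq 0$ and $q^\prime > [p]+4$. The first holds since $p$ is a fixed non-negative real. For the second, chaining the index inequalities coming from \eqref{L*}, \eqref{S*} and Theorem \ref{mild-strong} gives $q^\prime > [q]+4 > q > \tfrac{5}{4}d + [p^\prime]+1 > p^\prime > [p]+4$, so in particular $q^\prime > [p]+4$. This in turn guarantees, via Proposition \ref{prop1} and duality, that $L^\ast$ and $A^\ast$ are the required operators from $\Sc_{-p}\cap\E^\prime(K)$ into $\Sc_{-q^\prime}\cap\E^\prime(K)$ for each compact $K$, so that the $(-p,-q^\prime)$ Monotonicity inequality is even meaningful; by hypothesis this inequality holds.

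Finally I would invoke \cite[Theorem 4.4]{MR2373102}: since the $(-p,-q^\prime)$ Monotonicity inequality holds, $(-p,-q^\prime)$ strong solutions are unique. Applied to $\{Y_t\}$ and $\{\tilde Y_t\}$, both of which have just been exhibited as $(-p,-q^\prime)$ strong solutions with initial value $\psi$, this forces $Y_t = \tilde Y_t$ a.s.\ for all $t$, the equality holding in $\Sc_{-q^\prime}$ and hence, since both processes take values in $\Sc_{-p}$, in $\Sc_{-p}$ as well. This is precisely uniqueness of $(-p,-q)$ mild solutions.

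I do not expect a genuinely hard step here; the content of the corollary is the bookkeeping that lets one borrow the strong-solution theory. The only points that need care are verifying $q^\prime > [p]+4$, so that the cited uniqueness theorem is legitimately applicable at the index pair $(-p,-q^\prime)$ at which the Monotonicity inequality is being assumed, and confirming that uniqueness proved in the coarser space $\Sc_{-q^\prime}$ indeed pins down the $\Sc_{-p}$-valued mild solutions. Both are immediate once the inequalities among $p,p^\prime,q,q^\prime$ are written out, so the substance of the argument is simply the two-step reduction mild $\Rightarrow$ strong $\Rightarrow$ unique.
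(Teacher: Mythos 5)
Your proposal is correct and follows essentially the same route as the paper's own proof: reduce to strong solutions via Theorem \ref{mild-strong}, invoke the strong-solution uniqueness theorem under the $(-p,-q^\prime)$ Monotonicity inequality, and conclude equality in $\Sc_{-p}$ since both processes take values there. Your explicit verification of the index chain $q^\prime > [q]+4 > q > \tfrac{5}{4}d + [p^\prime]+1 > p^\prime > [p]+4$ is a useful piece of bookkeeping that the paper leaves implicit, but it does not change the argument.
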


\begin{proof}
If $\{Y^1_t\}$ and $\{Y^2_t\}$ are two $(-p-q)$ mild solutions of \eqref{eq5}, then by Theorem \ref{mild-strong}, they are also $(-p,-q^\prime)$ strong solutions. If the $(-p,-q^\prime)$ Monotonicity inequality holds, then we have a.s. $Y^1_t = Y^2_t, \forall t \geq 0$ in $\Sc_{-q^\prime}$. Since both $\{Y^1_t\}$ and $\{Y^2_t\}$ are $\Sc_{-p}$ valued, we get the required uniqueness.
\end{proof}

We now describe a situation where the Monotonicity inequality holds. See \cite[Theorem 2.1]{MR2590157}, \cite[Theorem 4.6]{MR3331916} for other cases where this inequality holds.

\begin{theorem}\label{L2-Monotonicity}
Let $\sigma, b$ be as in \eqref{eq1}. Fix $p \geq 5$. Then $(L^\ast,A^\ast)$ satisfies the $(p,0)$ Monotonicity inequality. \end{theorem}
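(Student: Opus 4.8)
The plan is to establish the inequality by the classical $\mathcal{L}^2$ energy computation, in which the second-order contribution of $L^\ast$ cancels exactly against the first-order contributions of the $A_i^\ast$. Write $a:=\sigma\sigma^t$, so that $a_{kl}=\sum_{i=1}^r\sigma_{ki}\sigma_{li}$ is symmetric, and recall that with $q=0$ the inner product $\langle\cdot,\cdot\rangle_0$ and the norm $\|\cdot\|_0$ in \eqref{mono-ineq} are just those of $\mathcal{L}^2(\R^d)=\Sc_0$. Fix a compact set $K$ and $\phi\in\Sc_p\cap\E^\prime(K)$ with $p\geq 5$. Since $p>[0]+4=4$, the dual formulation of Proposition \ref{prop1} recalled at the beginning of this section guarantees that $L^\ast\phi$ and each $A_i^\ast\phi$ belong to $\mathcal{L}^2(\R^d)$; hence every term on the left of \eqref{mono-ineq} is finite and well defined. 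Moreover $\phi$ is then an $H^2$ function with support in $K$, so all the integrations by parts performed below produce no boundary terms. (Equivalently, one may verify the identities for $\phi\in C^\infty_c$ and extend by density using the $\|\cdot\|_p$-continuity of $L^\ast,A_i^\ast$ into $\mathcal{L}^2$.)

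First I would expand $\sum_{i=1}^r\|A_i^\ast\phi\|_0^2$. Writing $A_i^\ast\phi=-\sum_k(\partial_k\sigma_{ki})\phi-\sum_k\sigma_{ki}\partial_k\phi$ and squaring, the top-order part is
\[\sum_{i=1}^r\int\Big(\sum_k\sigma_{ki}\partial_k\phi\Big)^2 dx=\int\sum_{k,l}a_{kl}\,\partial_k\phi\,\partial_l\phi\,dx,\]
while the remaining contributions are an integral of $\phi\,\partial_l\phi$ against the smooth coefficient $\sum_{i,k}(\partial_k\sigma_{ki})\sigma_{li}$, plus an integral of $\phi^2$ against the smooth coefficient $\sum_i(\sum_k\partial_k\sigma_{ki})^2$. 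Next I would treat $2\langle\phi,L^\ast\phi\rangle_0$. Integrating the diffusion part by parts once gives
\[\sum_{i,j}\int\phi\,\partial_i\partial_j(a_{ij}\phi)\,dx=-\int\sum_{i,j}a_{ij}\,\partial_i\phi\,\partial_j\phi\,dx-\int\phi\sum_{i,j}(\partial_j a_{ij})\,\partial_i\phi\,dx,\]
and the drift part $-2\sum_i\int\phi\,\partial_i(b_i\phi)$ reduces, using $\int b_i\phi\,\partial_i\phi=-\tfrac12\int(\partial_i b_i)\phi^2$, to $-\int(\sum_i\partial_i b_i)\phi^2$, whose coefficient is bounded because $b$ has bounded derivatives.

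The decisive step is the cancellation: on adding the two computations, the gradient-quadratic term $-\int\sum_{i,j}a_{ij}\partial_i\phi\,\partial_j\phi$ coming from $L^\ast$ cancels the term $+\int\sum_{k,l}a_{kl}\partial_k\phi\,\partial_l\phi$ coming from $\sum_i\|A_i^\ast\phi\|_0^2$, precisely because $a$ is symmetric. This self-adjointness feature (available exactly when $q=0$) is the structural heart of the argument, and I expect it to be the main point; everything after it is bookkeeping. What survives is a sum of terms of the two shapes $\int\phi\,c_l\,\partial_l\phi\,dx$ and $\int e\,\phi^2\,dx$, where $c_l,e$ are smooth functions built algebraically from $\sigma,b$ and their first derivatives. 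Each mixed term I would convert via $\int\phi\,c_l\,\partial_l\phi=\tfrac12\int c_l\,\partial_l(\phi^2)=-\tfrac12\int(\partial_l c_l)\,\phi^2$, which is legitimate since $c_l\in C^1$ and $\phi$ is compactly supported. Collecting everything produces
\[2\langle\phi,L^\ast\phi\rangle_0+\sum_{i=1}^r\|A_i^\ast\phi\|_0^2=\int_K g\,\phi^2\,dx\]
for a single smooth function $g$ assembled from $\sigma,b$ and their derivatives up to second order.

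Finally, since $g$ is continuous and $\phi$ is supported in the compact set $K$, I would bound $\int_K g\,\phi^2\leq\big(\sup_{x\in K}|g(x)|\big)\|\phi\|_0^2$, which is exactly \eqref{mono-ineq} with $q=0$ and $C_K:=\sup_{x\in K}|g(x)|<\infty$. This also accounts for the $K$-dependence of the constant: because $\sigma$ satisfies only a linear growth bound, the coefficients $a_{ij}$ and their derivatives grow with $|x|$, so no uniform constant is available, yet on each fixed compact set the supremum is finite. The only genuine obstacle is carrying out the exact algebraic cancellation of the second-order terms cleanly and checking that every leftover coefficient is at worst a product or a first/second derivative of $\sigma,b$, hence continuous, so that the compact-support restriction supplies the finite constant $C_K$.
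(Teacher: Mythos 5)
Your proposal is correct and follows essentially the same route as the paper's proof: an $\mathcal{L}^2$ energy computation in which the gradient-quadratic term from $2\langle\phi,L^\ast\phi\rangle_0$ cancels exactly against the principal part of $\sum_i\|A_i^\ast\phi\|_0^2$ by symmetry of $\sigma\sigma^t$, after which the leftover zeroth-order terms are bounded by $\sup_K$ of a continuous coefficient. The paper merely organizes the same computation slightly differently, splitting $L^\ast$ into its drift part $L_1^\ast$ and diffusion part $L_2^\ast$ and treating them in separate displays, and it likewise reduces to $\phi\in C^\infty_c(K)$ by the boundedness/density argument you mention as your alternative justification.
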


\begin{proof}
From the remark about the boundedness of $A^{\ast}$ and $L^{\ast}$
made before Definition \ref{Mon-ineq}, it is enough to verify the
inequality \eqref{mono-ineq} for $\phi \in C^\infty_c(K) \subset
{\Sc}_p\cap\E^{\prime}(K)$, for any compact set $K$. Consider the
operator $L_1^\ast\psi:=-\sum_{i=1}^d\partial_i(b_i\psi)$. Recall
from Section \ref{s:2} that we also use $\langle\cdot,\cdot\rangle$
for the inner product in ${\mathcal L}^2$. We have
\begin{align*}
\langle L_1^\ast\phi, \phi \rangle &= - \sum_{i=1}^d\langle \partial_ib_i\phi+b_i\partial_i\phi, \phi\rangle = \left\langle \left(-\sum_{i=1}^d
 \partial_ib_i\right)\phi,\phi \right\rangle - \sum_{i=1}^d\langle \partial_i \phi, b_i\phi\rangle\\
&=  \left\langle \left(-\sum_{i=1}^d \partial_ib_i\right)\phi,\phi
\right\rangle + \sum_{i=1}^d\langle \phi, \partial_i(b_i\phi)\rangle
\end{align*}
and hence
\begin{equation}\label{L1}
\langle L^\ast_1 \phi,\phi\rangle = -\frac{1}{2}\left\langle
\left(\sum_{i=1}^d \partial_ib_i\right)\phi,\phi \right\rangle \leq
C_b\|\phi\|^2,
\end{equation}
where $C_b$ is a positive constant depending on $b$. Recall that
$A^\ast \phi=(A^\ast _1\phi,\cdots,A^\ast _r\phi)$ with $A^\ast
_i\psi=-\sum_{j=1}^d\partial_j(\sigma_{ji}\psi)$. Also define
$L^\ast _2\phi
:=\frac{1}{2}\sum_{i,j=1}^d\partial^2_{i,j}((\sigma\sigma^t)_{ij}\phi)$.
For any $1 \leq i,j \leq d$, integration by parts yields
\begin{align*}
\left\langle \partial_j(\sigma_{jk})\phi, \sigma_{ik}\partial_i\phi \right\rangle &= \int_{\R^d} \partial_j(\sigma_{jk})\phi\, \sigma_{ik}\partial_i\phi\\
&= \frac{1}{2}\int_{\R^d} \partial_j(\sigma_{jk}) \sigma_{ik}\partial_i(\phi^2) = - \frac{1}{2}\int_{\R^d} \partial_i(\partial_j(\sigma_{jk}) \sigma_{ik})\phi^2
\end{align*}
Using above observation, we have
\begin{align*}
&\sum_{k=1}^r\|A_k^\ast\phi\|^2\\
&= \sum_{k=1}^r \left\langle \sum_{j=1}^d \partial_j(\sigma_{jk}\phi),\sum_{i=1}^d \partial_i(\sigma_{ik}\phi) \right\rangle\\
&= \sum_{k=1}^r \sum_{i,j=1}^d \left[ \left\langle \partial_j(\sigma_{jk})\phi, \partial_i(\sigma_{ik})\phi \right\rangle + \left\langle \partial_j(\sigma_{jk})\phi, \sigma_{ik}\partial_i\phi \right\rangle+ \left\langle \sigma_{jk}\partial_j\phi, \partial_i(\sigma_{ik})\phi \right\rangle\right]\\
&+ \sum_{k=1}^r \left\langle \sum_{j=1}^d \sigma_{jk} \partial_j\phi, \sum_{i=1}^d \sigma_{ik}\partial_i\phi \right\rangle\\
&= \sum_{k=1}^r \sum_{i,j=1}^d \left[ \left\langle \partial_j(\sigma_{jk})\phi, \partial_i(\sigma_{ik})\phi \right\rangle + 2\left\langle \partial_j(\sigma_{jk})\phi, \sigma_{ik}\partial_i\phi \right\rangle\right]\\
&+ \sum_{k=1}^r \left\langle \sum_{j=1}^d \sigma_{jk} \partial_j\phi, \sum_{i=1}^d \sigma_{ik}\partial_i\phi \right\rangle\\
&=\sum_{k=1}^r \sum_{i,j=1}^d  \int_{\R^d} \left[\partial_j(\sigma_{jk})\partial_i(\sigma_{ik})-\partial_i(\partial_j(\sigma_{jk}) \sigma_{ik})\right]\phi^2 + \sum_{k=1}^r \left\langle \sum_{j=1}^d \sigma_{jk} \partial_j\phi, \sum_{i=1}^d \sigma_{ik}\partial_i\phi \right\rangle\\
&=-\sum_{k=1}^r \sum_{i,j=1}^d  \int_{\R^d} \partial_{ij}^2(\sigma_{jk}) \sigma_{ik}\,\phi^2 + \sum_{k=1}^r \left\langle \sum_{j=1}^d \sigma_{jk} \partial_j\phi, \sum_{i=1}^d \sigma_{ik}\partial_i\phi \right\rangle
\end{align*}
Another integration by parts argument yields,
\begin{align*}
\langle L_2^\ast\phi, \phi\rangle&=\frac{1}{2}\sum_{i,j=1}^d\left\langle \partial_{ij}^2\left(\sum_{k=1}^r \sigma_{ik} \sigma_{jk} \phi\right), \phi \right\rangle\\
&= -\frac{1}{2}\sum_{i,j=1}^d \sum_{k=1}^r \left\langle \partial_j(\sigma_{ik} \sigma_{jk}) \phi + \sigma_{ik} \sigma_{jk} \partial_j\phi, \partial_i\phi \right\rangle\\
&=-\frac{1}{2}\sum_{i,j=1}^d \sum_{k=1}^r \int_{\R^d} \partial_j(\sigma_{ik} \sigma_{jk}) \phi \, \partial_i\phi-  \frac{1}{2}\sum_{i,j=1}^d \sum_{k=1}^r \left\langle \sigma_{ik} \sigma_{jk} \partial_j\phi, \partial_i\phi \right\rangle\\
&=-\frac{1}{4}\sum_{i,j=1}^d \sum_{k=1}^r \int_{\R^d} \partial_j(\sigma_{ik} \sigma_{jk})  \partial_i(\phi^2)-  \frac{1}{2}\sum_{i,j=1}^d \sum_{k=1}^r \left\langle \sigma_{ik} \sigma_{jk} \partial_j\phi, \partial_i\phi \right\rangle\\
&=-\frac{1}{4}\sum_{i,j=1}^d \sum_{k=1}^r \int_{\R^d} \partial_{ij}^2(\sigma_{ik} \sigma_{jk})\phi^2-  \frac{1}{2}\sum_{i,j=1}^d \sum_{k=1}^r \left\langle \sigma_{ik} \sigma_{jk} \partial_j\phi, \partial_i\phi \right\rangle
\end{align*}
Then
\begin{equation}\label{L2-A}
2\langle L_2^\ast\phi, \phi\rangle + \sum_{k=1}^r\|A_k^\ast\phi\|^2 \leq C_{\sigma,K}\|\phi\|^2,
\end{equation}
where $C_{\sigma,K}$ is a positive constant depending on $\sigma$ and $K$. Now adding \eqref{L1} and \eqref{L2-A} together, we get the required inequality.
\end{proof}

As an application of Theorem \ref{L2-Monotonicity} we get the next
result, wherein we note that the initial value $\psi$ need no longer
be of compact support.

\begin{corollary}
Let $\psi \in \Sc$ and $p \geq 5$. We have the existence and uniqueness of $(p,0)$ strong solutions of \eqref{eq5}. \end{corollary}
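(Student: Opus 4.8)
The plan is to get existence from the explicit process $Z_t(\psi)$ of Example \ref{spl-example} and uniqueness from the $(p,0)$ Monotonicity inequality just established in Theorem \ref{L2-Monotonicity}. The observation that makes the corollary work for non-compactly supported $\psi$ is that the constant $C$ in \eqref{mono-ineq} produced by Theorem \ref{L2-Monotonicity} can be taken \emph{independent of} $K$. Indeed, inspecting that proof, the first-order quadratic terms $\sum_k \big\| \sum_j \sigma_{jk}\partial_j\phi \big\|^2$ appearing in $2\langle L_2^\ast\phi,\phi\rangle$ and in $\sum_k\|A_k^\ast\phi\|^2$ cancel, leaving only integrals of the form $\int_{\R^d}(\text{second derivatives of }\sigma,b)\,\phi^2$, which are bounded by $C\|\phi\|_0^2$ with $C$ depending only on the sup-norms of the (bounded) derivatives of $\sigma$ and $b$. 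Since $C^\infty_c(\R^d)$ is dense in $\Sc_p$ and $L^\ast, A^\ast : \Sc_p \to \Sc_0$ are bounded for $p \geq 5 > 4$ (by the boundedness statement recalled before Definition \ref{Mon-ineq}, which also holds globally as in the proof of Theorem \ref{mild-strong}), the inequality $2\langle\phi,L^\ast\phi\rangle_0 + \sum_{i=1}^r\|A_i^\ast\phi\|_0^2 \leq C\|\phi\|_0^2$ extends from $C^\infty_c(\R^d)$ to all of $\Sc_p$ with the same constant. This uniform inequality is what lets the energy estimate proceed without confining supports to a fixed compact set.

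For existence I would verify directly that $Y_t := Z_t(\psi) = \int_{\R^d}\psi(x)\delta_{X(t,x)}\,dx$ solves \eqref{eq6}. By Example \ref{spl-example}, for $\psi \in \Sc$ this is the function $x \mapsto \psi(X(t,\cdot)^{-1}x)|J(t,x)|$, which lies in $\Sc$, hence in $\Sc_p$ for every $p$; it is adapted with continuous paths in $\Sc_p$. Testing against $\phi \in \Sc$, one has $\langle Z_t(\psi),\phi\rangle = \int_{\R^d}\psi(x)\phi(X(t,x))\,dx$; applying the finite-dimensional It\^o formula to $\phi(X(t,x))$, namely $d\phi(X(t,x)) = L\phi(X(t,x))\,dt + \sum_i A_i\phi(X(t,x))\,dB_t^i$, and integrating against $\psi(x)\,dx$ using a stochastic Fubini argument (justified by the rapid decrease of $\psi$ and moment bounds on $X$ and its derivatives, as in Proposition \ref{prp2}) gives $d\langle Z_t(\psi),\phi\rangle = \langle L^\ast Z_t(\psi),\phi\rangle\,dt + \sum_i\langle A_i^\ast Z_t(\psi),\phi\rangle\,dB_t^i$. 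Since all terms are $\Sc_0$-valued and $\phi$ ranges over a dense set, this is \eqref{eq6} in $\Sc_0$.

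For uniqueness, let $Y^1,Y^2$ be two solutions and set $Z := Y^1 - Y^2$, an $\Sc_p$-valued process, continuous in $\Sc_p$, satisfying $Z_t = \int_0^t A^\ast Z_s\cdot dB_s + \int_0^t L^\ast Z_s\,ds$ in $\Sc_0$. I would localize by the stopping times $\rho_n := \inf\{t : \|Y^1_t\|_p + \|Y^2_t\|_p \geq n\}$, which make the relevant stochastic integral a genuine martingale, apply the It\^o formula in the Hilbert space $\Sc_0$ to $\|Z_{t\wedge\rho_n}\|_0^2$, and take expectations to obtain
\[
\Exp\|Z_{t\wedge\rho_n}\|_0^2 = \Exp\int_0^{t\wedge\rho_n}\Big(2\langle Z_s,L^\ast Z_s\rangle_0 + \sum_{i=1}^r\|A_i^\ast Z_s\|_0^2\Big)\,ds.
\]
The uniform Monotonicity inequality bounds the integrand by $C\|Z_s\|_0^2$, and Gronwall's inequality forces $\Exp\|Z_{t\wedge\rho_n}\|_0^2 = 0$; letting $n \to \infty$ gives $Z_t = 0$ a.s.\ in $\Sc_0$. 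As $Y^1_t,Y^2_t \in \Sc_p$ and the inclusion $\Sc_p \hookrightarrow \Sc_0$ is injective, this yields $Y^1_t = Y^2_t$ in $\Sc_p$ for all $t$, almost surely.

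The main obstacle I anticipate is the existence verification rather than uniqueness: since Theorem \ref{thm3} and formula \eqref{eq4} are stated only for $\psi \in \E^\prime$, I must justify the weak It\^o computation above genuinely for $\psi \in \Sc$ with non-compact support, controlling the stochastic Fubini interchange and showing that the drift and martingale terms are $\Sc_0$-valued with continuous paths. A secondary subtlety is that the clause ``locally of compact support'' in Definition \ref{strong} is vacuous here; the roles it played (localizing the stochastic integrals and invoking boundedness of $L^\ast,A^\ast$ on $\E^\prime(K)$) are instead supplied by the $\Sc_p$-norm localization $\rho_n$ and by the \emph{global} boundedness of $L^\ast,A^\ast:\Sc_p\to\Sc_0$ and uniformity of the monotonicity constant, so one should state the corollary's notion of solution in the pure Hermite--Sobolev formulation.
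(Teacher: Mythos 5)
Your proposal is correct in outline but takes a genuinely different route from the paper. The paper first treats $\psi \in C^\infty_c(\R^d)$: existence comes from Theorem \ref{thm3} and Example \ref{spl-example}, and uniqueness from Theorem \ref{L2-Monotonicity}, where the $K$-dependence of the constant in \eqref{mono-ineq} is harmless because such solutions are locally of compact support and can be localized into a fixed compact set; general $\psi \in \Sc$ is then dispatched by an unspecified density argument. You instead (i) verify existence directly for $\psi\in\Sc$ by an It\^o/stochastic-Fubini computation, and (ii) prove uniqueness in the class of $\Sc_p$-valued solutions by upgrading the monotonicity inequality to a constant independent of $K$, extending it to all of $\Sc_p$ by density, and localizing with the norm stopping times $\rho_n$ rather than with supports. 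Your route buys something real: for non-compactly supported $\psi$ the difference of two solutions is not locally of compact support, so the paper's $K$-localized inequality cannot be applied to it, and density in the initial condition does not by itself yield uniqueness, which is an assertion about all solutions rather than just the constructed ones. Your closing remark that the solution concept must be restated without the compact-support clauses is likewise correct.

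There is, however, a gap in your justification of the claim on which your whole uniqueness argument rests, namely that the constant can be taken independent of $K$. After cancelling the gradient terms $\sum_k\big\|\sum_j\sigma_{jk}\partial_j\phi\big\|_0^2$, what survives is \emph{not} an integral of second derivatives of $\sigma$ against $\phi^2$; it is
\begin{equation*}
\frac12\sum_{i,j}\int_{\R^d}\partial^2_{ij}\big((\sigma\sigma^t)_{ij}\big)\,\phi^2\;-\;\sum_{i,j,k}\int_{\R^d}\sigma_{ik}\,\partial^2_{ij}(\sigma_{jk})\,\phi^2,
\end{equation*}
and both coefficient functions contain the \emph{undifferentiated} $\sigma_{ik}$, which has linear growth (only the derivatives of $\sigma$ are assumed bounded). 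Each term separately is only $O\big(\int(1+|x|)\phi^2\big)$, which is exactly why the paper records the constant $C_{\sigma,K}$ in \eqref{L2-A}; uniformity cannot simply be read off from ``inspecting that proof''. Your claim is nevertheless true, but it requires a second cancellation: expanding and using the $i\leftrightarrow j$ symmetry,
\begin{equation*}
\sum_{i,j}\partial^2_{ij}(\sigma_{ik}\sigma_{jk})=2\sum_{i,j}\sigma_{ik}\,\partial^2_{ij}\sigma_{jk}+\sum_{i,j}\big[\partial_j\sigma_{ik}\,\partial_i\sigma_{jk}+\partial_i\sigma_{ik}\,\partial_j\sigma_{jk}\big],
\end{equation*}
so the $\sigma\,\partial^2\sigma$ terms drop out and one gets the identity
\begin{equation*}
2\langle L^\ast\phi,\phi\rangle_0+\sum_{k=1}^r\|A_k^\ast\phi\|_0^2=\int_{\R^d}\Big(-\sum_i\partial_ib_i+\frac12\sum_{i,j,k}\big[\partial_j\sigma_{ik}\,\partial_i\sigma_{jk}+\partial_i\sigma_{ik}\,\partial_j\sigma_{jk}\big]\Big)\phi^2,
\end{equation*}
whose coefficient is bounded by sup-norms of first derivatives of $\sigma$ and $b$ alone. (The paper's own derivation keeps a minus sign after the last integration by parts in the computation of $\langle L_2^\ast\phi,\phi\rangle$, where the sign should flip; that is why it ends with a $K$-dependent constant, while the corrected computation gives the uniform one you need.) With this computation supplied, your density extension, It\^o--Gronwall argument and $\rho_n$-localization go through. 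The existence half of your proposal remains a plan---the stochastic Fubini interchange for rapidly decreasing but non-compactly supported $\psi$ still needs the moment and growth bounds you mention---but the paper's own treatment of general $\psi\in\Sc$ is equally terse there, so that is the part both arguments leave to be completed.
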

\begin{proof}
Let $\psi$ be a $C^\infty_c(\R^d)$ function. Recall that $\{Z_t(\psi)\}$ is $\Sc_p\cap\E^\prime$ valued for $p \geq 5$, is locally of compact support and solves \eqref{eq5} ((see Example \ref{spl-example}) and Theorem \ref{thm3}). From Theorem \ref{L2-Monotonicity}, we get the uniqueness.

Since $C^\infty_c(\R^d)$ is dense in
$\Sc$ (in $\|\cdot\|_p$), by density arguments, the result follows.
\end{proof}
{\bf Acknowledgement :} The first author would like to acknowledge the fact that he was supported by the NBHM (National Board of Higher Mathematics, under Department of Atomic Energy, Government of India) Post Doctoral Fellowship. The second author would like to thank
P.Fitzsimmons for some discussions relating to mild solutions of
SPDE's. He would also like to thank V.Mandrekar and L.Gawarecki for
discussions relating to the notion of distribution valued processes
that are `locally of compact support'. The third author would like to acknowledge the fact that he was supported by the ISF-UGC research grant.

\end{document}